\numberwithin{equation}{section}
\numberwithin{figure}{section}
\theoremstyle{plain}
\newtheorem{thm}{\protect\theoremname}[section]
\theoremstyle{remark}
\newtheorem{rem}[thm]{\protect\remarkname}
\theoremstyle{plain}
\newtheorem{cor}[thm]{\protect\corollaryname}
\theoremstyle{definition}
\newtheorem{defn}[thm]{\protect\definitionname}
\newtheorem{example}[thm]{\protect\examplename}
\theoremstyle{plain}
\newtheorem{lem}[thm]{\protect\lemmaname}
\newtheorem{prop}[thm]{\protect\propositionname}
\providecommand{\corollaryname}{Corollary}
\providecommand{\definitionname}{Definition}
\providecommand{\examplename}{Example}
\providecommand{\lemmaname}{Lemma}
\providecommand{\propositionname}{Proposition}
\providecommand{\remarkname}{Remark}
\providecommand{\theoremname}{Theorem}
\begin{document}
\subjclass[2020]{Primary: 47A20; secondary: 47A13, 47A45, 47A60}
\title{Moment kernels, nested defects, and Cuntz dilations}
\begin{abstract}
Random operator tuples possess a rich second-moment structure that
is not visible at the level of pointwise operator inequalities. This
paper shows that their averaged word moments form a positive kernel
whose behavior is controlled by a single shift-positivity condition.
When this condition holds, the kernel admits a Cuntz dilation, and
all mean-square interactions are realized inside a canonical isometric
model. This leads to a mean-square version of the free von Neumann
inequality and to a free functional calculus for random tuples. We
further introduce a hierarchy of higher-order defects of the moment
kernel and prove that their positivity is equivalent to the existence
of a nested chain of projections inside one Cuntz dilation. This yields
a multi-level decomposition of moment structure, a Wold-type splitting
into dissipative and unitary parts, and a curvature-type invariant
that measures the asymptotic non-dissipating content of the tuple.
\end{abstract}

\author{James Tian}
\address{Mathematical Reviews, 535 W. William St, Suite 210, Ann Arbor, MI
48103, USA}
\email{james.ftian@gmail.com}
\keywords{Operator-valued kernel, dilation, functional calculus, moment kernel,
curvature}

\maketitle
\tableofcontents{}

\section{Introduction}\label{sec:1}

The analysis of non-self-adjoint operators often relies on embedding
the operator into a more symmetric space where the structure becomes
clearer. Since the foundational works of Aronszajn on reproducing
kernels \cite{MR51437} and Stinespring on positive linear maps \cite{MR69403},
it has been understood that dilation mechanisms can reveal the true
geometry of an operator. Classical models for single contractions,
refined through the Sz.-Nagy-Foias theory \cite{MR2760647}, provided
the first major example (see also \cite{MR2743416} for a modern overview).

The multivariable setting has been shaped by Frazho \cite{MR671311},
Bunce \cite{MR744917}, and Popescu \cite{MR972704}, whose introduction
of noncommuting models and isometric dilations for operator tuples
established a new paradigm for free semigroup operator theory. This
line of work expanded rapidly, influencing free probability \cite{MR799593,MR1094052,MR1407898,MR2266879},
noncommutative analytic Toeplitz algebras \cite{MR1625750}, and the
function-theoretic aspects of multivariable operator theory \cite{MR1129595,MR2566311,MR3345180}.
This area remains highly active, with recent developments extending
functional models and dilation techniques to tetrablock contractions
\cite{MR4534539}, commuting Hilbert-space contractions \cite{MR4200246},
$q$-commuting co-extensions \cite{MR4510089}, and pairs of commuting
pure contractions \cite{MR4804327}. Further refinements include,
for example, $\Gamma_{n}$-contractions \cite{MR4369650,MR4765823},
negative powers of contractions \cite{MR4720521}, and absolutely
dilatable Schur multipliers \cite{MR4690505}.

The present paper approaches dilation theory through the structure
of non-commutative moment kernels. Such kernels arise naturally whenever
one records the correlations of operator words along the free semigroup.
While these objects are often motivated by stochastic systems, such
as random Carleson sequences \cite{MR4796153}, random kernel approximation
\cite{MR4896090}, and stochastic operator models \cite{MR4595378},
they are interesting in their own right as self-contained operator-valued
kernels. This perspective aligns with the rich contemporary literature
on structured function spaces, including generalized de Branges-Rovnyak
spaces \cite{MR4866510,MR4567339}, Dirichlet spaces \cite{MR3906294},
and vector-valued de Branges spaces \cite{MR4507623,MR4513110}. Parallel
developments in sampling formulas \cite{MR3711878}, interpolating
sequences \cite{MR4605405}, Parseval frames \cite{MR4860987}, and
weighted Bergman kernels \cite{MR3522008} show the utility of kernel
methods in capturing subtle analytic phenomena. Our work situates
moment kernels within this context, leveraging connections to smooth
approximations \cite{MR4521737}, contractive multipliers \cite{MR3626500},
polyanalytic functions \cite{MR4658229}, and Hilbert space singularities
\cite{MR4530898}.

Our first main observation is that a single shift-positivity condition
on the moment kernel plays the exact role that contractivity plays
in classical dilation theory. When this condition holds, the kernel
dilates to a Cuntz representation, and all of its correlations are
realized by compressing the universal model. This yields a kernel-theoretic
analogue of Popescu's dilation theorem \cite{MR972704}. The resulting
dilation does not concern the underlying operators that may or may
not have generated the kernel; it is instead a structural property
of the kernel itself. Related dilation mechanisms appear in the work
of Curgus-Dijksma \cite{MR4644887}, Kuzel \cite{MR4751521}, and
Bhat-Chongdar \cite{MR4802624}.

A free functional calculus then naturally follows. In analogy with
the free disk and Hardy algebra calculi \cite{MR1129595,MR2566311},
evaluation of polynomials with respect to the kernel extends uniquely
and contractively to the free analytic algebras. This identifies an
intrinsic Banach space associated with the moment kernel and provides
access to higher-order analytic properties of the corresponding model.
Recent advances in analytic and geometric operator theory, such as
those of Lu-Yang-Zu on compressed shifts \cite{MR4604840}, Ball-Sau
on functional models \cite{MR4200246}, and the structural work on
semigroup-generated isometries \cite{MR4460482,MR4430950}, demonstrate
the usefulness of such calculi. Furthermore, recent work on analytic
Besov calculi \cite{MR4757479} and characterizations of dilations
via approximants \cite{MR4622471} suggests that these tools can be
adapted to more general function spaces.

Next, we extract a multi-level defect hierarchy for moment kernels.
Classical dilation theory typically involves a single defect operator
whose positivity determines the existence of a dilation. By contrast,
moment kernels naturally carry an infinite sequence of higher-order
defects. We show that positivity of the entire chain is equivalent
to the existence of a single Cuntz dilation equipped with a nested
sequence of projections encoding all defect levels simultaneously.
This structural result parallels aspects of the Wold decomposition
for isometries and row-isometries \cite{MR4651635,MR3151275}, $\mathcal{U}_{n}$-twisted
contractions \cite{MR4555751}, and odometer semigroups \cite{MR4430950}.
It also connects to recent refinements in block decompositions of
moment dilations \cite{MR4874969} and complete Nevanlinna-Pick kernels
\cite{MR4592266}, but arises here solely from kernel data.

With this defect hierarchy in place, we obtain a Wold-type decomposition
for moment kernels. Every contractive moment kernel splits uniquely
into a shift-generated part and a purely dissipative part. The decomposition
is realized inside the minimal Cuntz dilation and identifies exactly
which components of the kernel behave ``unitarily'' and which decay
under the shift. This echoes recent works on decay and unitary asymptotes
\cite{MR4658745}, shift-type invariant subspaces \cite{MR4396935},
and Brownian extensions \cite{MR4617048}.

Finally, we introduce a curvature-type invariant for moment kernels.
Inspired by the curvature invariants of Arveson \cite{MR1712636,MR1758582}
and their noncommutative analogues \cite{MR1822685,MR1857801,MR3345180},
the invariant measures the asymptotic behavior of the shift action
on the kernel. It admits several equivalent formulations, including
a defect-summation identity reminiscent of trace identities in noncommutative
martingale theory \cite{MR1916654,MR2319715,MR2338860,MR2589944}.
The invariant is stable under compression, dilation, and unitary similarity
\cite{MR2891735}, and thus provides a quantitative descriptor of
the kernel's long-term structure.

\section{Dilation of Free Moment Kernels}\label{sec:2}

The starting point of our analysis is the moment kernel associated
with a tuple of random operators. In classical dilation theory, one
studies a fixed operator tuple and encodes its algebraic relations
through powers and products. In the random setting, the natural object
is obtained by averaging such products, leading to a positive definite
kernel on the free semigroup. This kernel captures the mean-square
behavior of the random tuple and provides the right entry point for
dilation. The goal of this section is to characterize precisely when
such kernels admit dilations to Cuntz families of isometries, and
to establish the equivalence between this dilation property and a
positivity condition on a shifted version of the kernel.

Fix $d\in\mathbb{N}$. Let $\mathbb{F}{}^{+}_{d}$ be the free semigroup
on generators $\left\{ 1,\dots,d\right\} $, with neutral element
$\emptyset$. For a word $\alpha=i_{1}\dots i_{k}\in\mathbb{F}^{+}_{d}$,
write $\tilde{\alpha}=i_{k}\cdots i_{1}$ for the reversed word. Given
any $d$-tuple of operators $S=\left(S_{1},\dots,S_{d}\right)$ on
a Hilbert space $H$, define 
\[
S^{\alpha}\coloneqq S_{i_{1}}\cdots S_{i_{k}},\quad S^{\emptyset}\coloneqq I_{H}.
\]
Then for every $\alpha\in\mathbb{F}^{+}_{d}$, 
\[
\left(S^{\alpha}\right)^{*}=\left(S^{*}\right)^{\tilde{\alpha}},
\]
where $S^{*}=\left(S^{*}_{1},\dots,S^{*}_{d}\right)$.

Let $\left(\Omega,\mathbb{P}\right)$ be a probability space and $A=\left(A_{1},\dots,A_{d}\right)$
a $d$-tuple of (strongly measurable) random operators $A_{i}\colon\Omega\rightarrow\mathcal{L}\left(H\right)$
on a Hilbert space $H$, and assume 
\[
\mathbb{E}\left[\left\Vert A^{\alpha}\right\Vert ^{2}\right]<\infty,\quad\alpha\in\mathbb{F}^{+}_{d}.
\]

Define the moment kernel $K\colon\mathbb{F}^{+}_{d}\times\mathbb{F}^{+}_{d}\rightarrow\mathcal{L}\left(H\right)$
by 
\begin{equation}
K\left(\alpha,\beta\right)\coloneqq\mathbb{E}\left[A^{\alpha}\left(A^{\beta}\right)^{*}\right],\quad\alpha,\beta\in\mathbb{F}^{+}_{d}.\label{eq:a-1}
\end{equation}
Then $K$ is positive definite (p.d.): for every finite family $\left\{ \left(\alpha_{j},u_{j}\right)\right\} ^{N}_{j=1}\subset\mathbb{F}^{+}_{d}\times H$,
\begin{align*}
\sum\nolimits^{N}_{i,j=1}\left\langle u_{i},K\left(\alpha_{i},\alpha_{j}\right)u_{j}\right\rangle _{H} & =\sum\nolimits^{N}_{i,j=1}\mathbb{E}\left[\left\langle u_{i},A^{\alpha_{i}}\left(A^{\alpha_{j}}\right)^{*}u_{j}\right\rangle _{H}\right]\\
 & =\mathbb{E}\left[\left\Vert \sum\nolimits_{i}\left(A^{\alpha_{i}}\right)^{*}u_{i}\right\Vert ^{2}_{H}\right]\geq0.
\end{align*}
Define the kernel 
\begin{equation}
K_{\Sigma}\left(\alpha,\beta\right)\coloneqq\sum^{d}_{i=1}K\left(\alpha i,\beta i\right),\quad\alpha,\beta\in\mathbb{F}^{+}_{d}.\label{eq:b-2}
\end{equation}

\begin{thm}[Free moment dilation]
\label{thm:1}For the moment kernel $K$ in \eqref{eq:a-1}, the
following are equivalent: 
\begin{enumerate}
\item \label{enu:1a}There exists a Hilbert space $\mathcal{K}$, a Cuntz
family $S=\left(S_{1},\dots,S_{d}\right)$ on $\mathcal{K}$ (i.e.,
$S^{*}_{i}S_{j}=\delta_{i,j}I_{\mathcal{K}}$ and $\sum^{d}_{i=1}S_{i}S^{*}_{i}=I_{\mathcal{K}}$),
a projection $P\in\mathcal{L}\left(\mathcal{K}\right)$, and an isometry
$W\colon H\rightarrow\mathcal{K}$ such that 
\begin{equation}
K\left(\alpha,\beta\right)=W^{*}S^{\alpha}P\left(S^{\beta}\right)^{*}W,\quad\alpha,\beta\in\mathbb{F}^{+}_{d};\label{eq:a1}
\end{equation}
and, 
\begin{equation}
\sum\nolimits^{d}_{i=1}S_{i}PS^{*}_{i}\leq P\;\text{on }\mathcal{M}\coloneqq\overline{span}\left\{ \left(S^{\gamma}\right)^{*}WH:\gamma\in\mathbb{F}^{+}_{d}\right\} \label{eq:a2}
\end{equation}
\item \label{enu:1b}The kernel $K_{\Sigma}$ is dominated by $K$ in the
p.d. order: 
\begin{equation}
K_{\Sigma}\leq K.\label{eq:a3}
\end{equation}
(That is, $K-K_{\Sigma}$ is p.d.) 
\end{enumerate}
Moreover, $K_{\Sigma}=K$ if and only if 
\begin{equation}
K\left(\alpha,\beta\right)=W^{*}S^{\alpha}\left(S^{\beta}\right)^{*}W,\quad\alpha,\beta\in\mathbb{F}^{+}_{d}.\label{eq:a-5}
\end{equation}

\end{thm}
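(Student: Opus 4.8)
The plan is to prove the equivalence by two direct arguments and then extract the final ``moreover'' statement from the explicit construction used for $\ref{enu:1b}\Rightarrow\ref{enu:1a}$. For $\ref{enu:1a}\Rightarrow\ref{enu:1b}$ I would compute $K_{\Sigma}$ from \eqref{eq:a1}: since $S^{\alpha k}=S^{\alpha}S_{k}$ and $(S^{\beta k})^{*}=S_{k}^{*}(S^{\beta})^{*}$, we get $K_{\Sigma}(\alpha,\beta)=\sum_{k=1}^{d}W^{*}S^{\alpha}S_{k}PS_{k}^{*}(S^{\beta})^{*}W$, hence
\[
(K-K_{\Sigma})(\alpha,\beta)=W^{*}S^{\alpha}\Bigl(P-\sum\nolimits_{k}S_{k}PS_{k}^{*}\Bigr)(S^{\beta})^{*}W .
\]
For a finite family $\{(\alpha_{j},u_{j})\}_{j=1}^{N}$ put $\eta=\sum_{j}(S^{\alpha_{j}})^{*}Wu_{j}\in M$; then $\sum_{i,j}\langle u_{i},(K-K_{\Sigma})(\alpha_{i},\alpha_{j})u_{j}\rangle=\langle\eta,(P-\sum_{k}S_{k}PS_{k}^{*})\eta\rangle\ge 0$ by \eqref{eq:a2}, so $K-K_{\Sigma}$ is p.d.

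For $\ref{enu:1b}\Rightarrow\ref{enu:1a}$ I would build the dilation by hand. Let $\mathcal{K}_{0}$ be the Hilbert space of the Kolmogorov (GNS) decomposition of the p.d.\ kernel $K$: complete the quotient of $\bigoplus_{\alpha\in\mathbb{F}^{+}_{d}}H$ modulo the kernel of $\langle(u_{\alpha}),(v_{\beta})\rangle_{K}=\sum_{\alpha,\beta}\langle u_{\alpha},K(\alpha,\beta)v_{\beta}\rangle$, and write $\xi^{u}_{\alpha}$ for the class of $u\in H$ in the $\alpha$-th summand, so that $\langle\xi^{u}_{\alpha},\xi^{v}_{\beta}\rangle=\langle u,K(\alpha,\beta)v\rangle$ and the $\xi^{u}_{\alpha}$ span a dense subspace. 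Since $K(\emptyset,\emptyset)=I_{H}$, the map $Wu:=\xi^{u}_{\emptyset}$ is an isometry $H\to\mathcal{K}_{0}\subseteq\mathcal{K}$. On the dense span define the ``append'' operators $D_{i}\xi^{u}_{\alpha}:=\xi^{u}_{\alpha i}$. The only place the hypothesis enters is the identity $\sum^{d}_{i=1}\|D_{i}x\|^{2}=\langle x,x\rangle_{K_{\Sigma}}\le\langle x,x\rangle_{K}=\|x\|^{2}$, valid for $x$ in the span precisely because $K_{\Sigma}\le K$; this makes each $D_{i}$ well defined on the quotient and a contraction on $\mathcal{K}_{0}$ with $\sum_{i}D^{*}_{i}D_{i}\le I_{\mathcal{K}_{0}}$, i.e.\ $(D^{*}_{1},\dots,D^{*}_{d})$ is a row contraction.

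Next, apply the Frazho--Bunce--Popescu isometric dilation: there is $\widetilde{\mathcal{K}}\supseteq\mathcal{K}_{0}$ and a row isometry $V=(V_{1},\dots,V_{d})$ on $\widetilde{\mathcal{K}}$ with $\mathcal{K}_{0}$ invariant under every $V^{*}_{i}$ and $V^{*}_{i}|_{\mathcal{K}_{0}}=D_{i}$; then extend $V$ to a Cuntz family $S=(S_{1},\dots,S_{d})$ on some $\mathcal{K}\supseteq\widetilde{\mathcal{K}}$ with $\widetilde{\mathcal{K}}$ invariant under each $S_{i}$ (a standard completion of a row isometry to a Cuntz family). Let $P$ be the orthogonal projection of $\mathcal{K}$ onto $\mathcal{K}_{0}$; then $\widetilde{\mathcal{K}}^{\perp}$ is invariant under each $S^{*}_{i}$ and $S^{*}_{i}h=D_{i}h+(\text{vector in }\widetilde{\mathcal{K}}^{\perp})$ for $h\in\mathcal{K}_{0}$. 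By induction on $|\beta|$ this gives $(S^{\beta})^{*}Wu=\xi^{u}_{\beta}+j_{\beta,u}$ with $j_{\beta,u}\in\widetilde{\mathcal{K}}^{\perp}$, hence $P(S^{\beta})^{*}Wu=\xi^{u}_{\beta}$ and, since $\xi^{u}_{\beta}\perp j_{\alpha,u'}$,
\[
\langle W^{*}S^{\alpha}P(S^{\beta})^{*}Wu,u'\rangle=\langle\xi^{u}_{\beta},(S^{\alpha})^{*}Wu'\rangle=\langle\xi^{u}_{\beta},\xi^{u'}_{\alpha}\rangle=\langle K(\alpha,\beta)u,u'\rangle ,
\]
which is \eqref{eq:a1}. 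For \eqref{eq:a2}: every vector in the dense span of $M$ has the form $\zeta=\xi+j$ with $\xi\in\mathcal{K}_{0}$, $j\in\widetilde{\mathcal{K}}^{\perp}$; since $S^{*}_{i}j$ and the $\widetilde{\mathcal{K}}^{\perp}$-part of $S^{*}_{i}\xi$ are killed by $P$, one gets $PS^{*}_{i}\zeta=D_{i}\xi$, so $\langle\zeta,(P-\sum_{i}S_{i}PS^{*}_{i})\zeta\rangle=\|\xi\|^{2}-\sum_{i}\|D_{i}\xi\|^{2}=\langle\xi,(I-\sum_{i}D^{*}_{i}D_{i})\xi\rangle\ge0$, and by continuity this persists on all of $M$.

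Finally, for the ``moreover'': if \eqref{eq:a-5} holds for some Cuntz family $S$ and isometry $W$, then $K_{\Sigma}(\alpha,\beta)=\sum_{k}W^{*}S^{\alpha}S_{k}S^{*}_{k}(S^{\beta})^{*}W=W^{*}S^{\alpha}(S^{\beta})^{*}W=K(\alpha,\beta)$ using $\sum_{k}S_{k}S^{*}_{k}=I$; conversely, if $K_{\Sigma}=K$ then in the construction above $\|j_{\beta,u}\|^{2}=\langle u,(K-K_{\Sigma})(\beta,\beta)u\rangle=0$, so $(S^{\beta})^{*}Wu=\xi^{u}_{\beta}\in\mathcal{K}_{0}$, the projection in \eqref{eq:a1} is redundant, and \eqref{eq:a-5} holds for that $S,W$. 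I expect the main obstacle to lie in the dilation step of $\ref{enu:1b}\Rightarrow\ref{enu:1a}$: passing from the contractive append operators to a genuine Cuntz family while tracking the geometry. The subtle point is that $\mathcal{K}_{0}$ is co-invariant for the intermediate row isometry $V$ but generally \emph{not} co-invariant for the final Cuntz family $S$ --- if it were, \eqref{eq:a-5} would follow and force $K=K_{\Sigma}$ --- which is exactly why the projection $P$ is indispensable in \eqref{eq:a1}, and correct bookkeeping of the defect vectors $j_{\beta,u}\in\widetilde{\mathcal{K}}^{\perp}$ is where care is needed.
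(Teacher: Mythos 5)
Your proof of the equivalence \eqref{enu:1a}$\Leftrightarrow$\eqref{enu:1b} is correct and is essentially the paper's route: the direction \eqref{enu:1a}$\Rightarrow$\eqref{enu:1b} is the same quadratic-form computation with $\eta=\sum_j(S^{\alpha_j})^*Wu_j$, and for the converse you build the Kolmogorov space, the append operators $D_i$ (the paper's $B_i$), use $K_\Sigma\le K$ to get the column contraction, invoke Frazho--Bunce--Popescu plus a Cuntz extension, and compress by the projection onto the Kolmogorov space. Your explicit bookkeeping of the defect vectors $j_{\beta,u}\in\widetilde{\mathcal K}^{\perp}$ replaces the paper's algebraic identity $B^{\alpha}=J^{*}(S^{\tilde\alpha})^{*}J$ together with $V_\alpha=B^{\tilde\alpha}V_\emptyset$, but the substance and the level of reliance on the cited dilation theory are the same. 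The forward half of the ``moreover'' (\eqref{eq:a-5}$\Rightarrow K_\Sigma=K$) is also identical to the paper's.

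There is, however, a genuine gap in the converse half of the ``moreover'': the identity $\|j_{\beta,u}\|^{2}=\langle u,(K-K_{\Sigma})(\beta,\beta)u\rangle$ that you invoke is false as a formula. What the construction actually gives is a \emph{summed} relation: writing $S_i^*\xi^u_\beta=D_i\xi^u_\beta+y_{\beta,i}$ with $y_{\beta,i}\in\widetilde{\mathcal K}^{\perp}$, the Cuntz relation $\sum_i\|S_i^*x\|^2=\|x\|^2$ yields $\sum_{i}\|y_{\beta,i}\|^{2}=\langle u,(K-K_{\Sigma})(\beta,\beta)u\rangle$, and $j_{\beta i,u}=y_{\beta,i}+S_i^*j_{\beta,u}$; individually $\|j_{\beta,u}\|^2$ is not controlled by the diagonal value of $K-K_\Sigma$ at $\beta$. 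Concretely, take $H=\mathbb{C}$ and $A_1=\dots=A_d=0$: then $K(\emptyset,\emptyset)=1$, all other values of $K$ and all of $K_\Sigma$ vanish, $\mathcal K_0=\mathbb{C}\,\xi^1_\emptyset$, $D_i=0$, and $j_{i,1}=S_i^*\xi^1_\emptyset$, so $\sum_i\|j_{i,1}\|^2=\|\xi^1_\emptyset\|^2=1$ by the Cuntz relation, whereas your formula would force every $\|j_{i,1}\|^2=(K-K_\Sigma)(i,i)=0$. The conclusion you want is still true, but it needs an argument: under $K_\Sigma=K$ one has $\sum_i\|y_{\beta,i}\|^2=0$, hence all $y_{\beta,i}=0$, and since $j_{\emptyset,u}=0$, induction on $|\beta|$ via $j_{\beta i,u}=y_{\beta,i}+S_i^*j_{\beta,u}$ gives $j_{\beta,u}=0$ for all $\beta$, whence $P(S^\beta)^*W=(S^\beta)^*W$ and \eqref{eq:a-5}. (The paper reaches the same point differently: from $K_\Sigma=K$ it upgrades the form identity $\sum_i S_iPS_i^*=P$ on $M$ to $PS_j^*=S_j^*P$ on $M$ and then to $P(S^\beta)^*W=(S^\beta)^*W$.) With this step repaired your argument is complete.
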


\begin{proof}
\eqref{enu:1a} $\Rightarrow$ \eqref{enu:1b}. Fix a finite family
$\left\{ \left(\alpha_{j},u_{j}\right)\right\} ^{N}_{j=1}\subset\mathbb{F}^{+}_{d}\times H$
and set 
\[
y\coloneqq\sum\nolimits^{N}_{j=1}\left(S^{\alpha_{j}}\right)^{*}Wu_{j}\in M.
\]
Then (using $S^{\alpha i}=S^{\alpha}S_{i}$ and \eqref{eq:a1}) 
\begin{eqnarray*}
 &  & \sum\nolimits^{N}_{j,k=1}\left\langle u_{j},K_{\Sigma}\left(\alpha_{j},\alpha_{k}\right)u_{k}\right\rangle _{H}\\
 & = & \sum\nolimits^{d}_{i=1}\sum\nolimits^{N}_{j,k=1}\left\langle u_{j},K\left(\alpha_{j}i,\alpha_{k}i\right)u_{k}\right\rangle _{H}\\
 & = & \sum\nolimits^{d}_{i=1}\sum\nolimits^{N}_{j,k=1}\left\langle \left(S^{\alpha_{j}}\right)^{*}Wu_{j},\left(S_{i}PS^{*}_{i}\right)\left(S^{\alpha_{k}}\right)^{*}Wu_{k}\right\rangle _{\mathcal{K}}\\
 & = & \left\langle y,\left(\sum\nolimits^{d}_{i=1}S_{i}PS^{*}_{i}\right)y\right\rangle _{\mathcal{K}}.
\end{eqnarray*}
Similarly, using \eqref{eq:a1}, 
\[
\sum\nolimits^{N}_{j,k=1}\left\langle u_{j},K\left(\alpha_{j},\alpha_{k}\right)u_{k}\right\rangle _{H}=\sum\nolimits^{N}_{j,k=1}\left\langle \left(S^{\alpha_{j}}\right)^{*}Wu_{j},P\left(S^{\alpha_{k}}\right)^{*}Wu_{k}\right\rangle _{\mathcal{K}}=\left\langle y,Py\right\rangle _{\mathcal{K}}.
\]
By assumption \eqref{eq:a2}, we have 
\[
\left\langle y,\left(\sum\nolimits^{d}_{i=1}S_{i}PS^{*}_{i}\right)y\right\rangle _{\mathcal{K}}\leq\left\langle y,Py\right\rangle _{\mathcal{K}},
\]
which is exactly \eqref{eq:a3}.

\eqref{enu:1b} $\Rightarrow$ \eqref{enu:1a}. Since $K$ is p.d.,
it admits a Kolmogorov/GNS factorization: there exists a Hilbert space
$H'$ and operators $V_{\alpha}\colon H\rightarrow H'$ such that
\[
K\left(\alpha,\beta\right)=V^{*}_{\alpha}V_{\beta},\quad\alpha,\beta\in\mathbb{F}^{+}_{d},
\]
and $\mathscr{D}\coloneqq span\left\{ V_{\alpha}u:\alpha\in\mathbb{F}^{+}_{d},u\in H\right\} $
is dense in $H'$. 

Concretely, one may take $H'$ to be the reproducing kernel Hilbert
space (RKHS) of the scalar valued kernel 
\[
\tilde{K}\left(\left(\alpha,u\right),\left(\beta,v\right)\right)\coloneqq\left\langle u,K\left(\alpha,\beta\right)v\right\rangle _{H},
\]
defined on $\left(\mathbb{F}^{+}_{d}\times H\right)\times\left(\mathbb{F}^{+}_{d}\times H\right)$,
and set 
\[
V_{\alpha}u=\tilde{K}\left(\cdot,\left(\alpha,u\right)\right)\in H'.
\]
For details, see e.g., \cite{MR2938971,MR4250453,tian2025randomoperatorvaluedkernelsmoment}. 

On the algebraic span $\mathscr{D}$, define 
\[
B_{i}\left(V_{\alpha}u\right)\coloneqq V_{\alpha i}u,\quad i=1,\dots,d,
\]
and extend linearly. 

Take an arbitrary element $x=\sum_{\alpha}V_{\alpha}u_{\alpha}\in\mathscr{D}$.
Then 
\[
\left\Vert x\right\Vert ^{2}_{H'}=\sum\nolimits_{\alpha,\beta}\left\langle u_{\alpha},K\left(\alpha,\beta\right)u_{\beta}\right\rangle _{H},\quad\left\Vert B_{i}x\right\Vert ^{2}_{H'}=\sum\nolimits_{\alpha,\beta}\left\langle u_{\alpha},K\left(\alpha i,\beta i\right)u_{\beta}\right\rangle _{H}.
\]
Summing over $i$, 
\begin{align*}
\sum\nolimits^{d}_{i=1}\left\Vert B_{i}x\right\Vert ^{2}_{H'} & =\sum\nolimits_{\alpha,\beta}\left\langle u_{\alpha},\left(\sum\nolimits^{d}_{i=1}K\left(\alpha i,\beta i\right)\right)u_{\beta}\right\rangle _{H}\\
 & =\sum\nolimits_{\alpha,\beta}\left\langle u_{\alpha},K_{\Sigma}\left(\alpha,\beta\right)u_{\beta}\right\rangle _{H}\\
 & \leq\sum\nolimits_{\alpha,\beta}\left\langle u_{\alpha},K\left(\alpha,\beta\right)u_{\beta}\right\rangle _{H}=\left\Vert x\right\Vert ^{2}_{H'},
\end{align*}
by the hypothesis $K_{\Sigma}\leq K$. Hence the linear map 
\[
B_{col}\colon\mathscr{D}\rightarrow H'^{\oplus d},\quad B_{col}x\coloneqq\left(B_{1}x,\dots,B_{d}x\right)
\]
is contractive on $\mathscr{D}$; it extends by continuity to a contraction
$B_{col}\colon H'\rightarrow H'^{\oplus d}$, and each $B_{i}$ has
a bounded extension to $H'$ with 
\[
\sum B^{*}_{i}B_{i}\leq I_{H'}\quad\left(\text{as operators on }H'\right).
\]
(So $B_{col}$ is a column contraction; equivalently the adjoint $\left(B^{*}_{1},\dots,B^{*}_{d}\right)$
is a row contraction.)

By the multivariable dilation theory of Frazho, Bunce, and Popescu
(\cite{MR671311,MR744917,MR972704}), there exists a Hilbert space
$\mathcal{K}$, a Cuntz family $S=\left(S_{1},\dots,S_{d}\right)$
on $\mathcal{K}$ with orthogonal ranges, and an isometry $J\colon H'\rightarrow\mathcal{K}$
such that 
\[
S^{*}_{i}J=JB_{i},\quad i=1,\dots,d.
\]
In particular, for every word $\alpha\in\mathbb{F}^{+}_{d}$, 
\[
B^{\alpha}=J^{*}\left(S^{\tilde{\alpha}}\right)^{*}J.
\]
(Recall $\tilde{\alpha}$ is the reversed word for $\alpha\in\mathbb{F}^{+}_{d}$.)

Note that $V_{\alpha}=B^{\tilde{\alpha}}V_{\emptyset}$ (by construction).
Therefore, 

\begin{align*}
K\left(\alpha,\beta\right) & =V^{*}_{\alpha}V_{\beta}=V^{*}_{\emptyset}\left(B^{\tilde{\alpha}}\right)^{*}B^{\tilde{\beta}}V_{\emptyset}\\
 & =V^{*}_{\emptyset}\left(J^{*}\left(S^{\alpha}\right)^{*}J\right)^{*}\cdot J^{*}\left(S^{\beta}\right)^{*}JV_{\emptyset}\\
 & =\left(JV_{\emptyset}\right)^{*}S^{\alpha}\left(JJ^{*}\right)\left(S^{\beta}\right)^{*}\left(JV_{\emptyset}\right)\\
 & =W^{*}S^{\alpha}P\left(S^{\beta}\right)^{*}W
\end{align*}
where $W\coloneqq JV_{\emptyset}:H\rightarrow\mathcal{K}$ (isometric
because $J$ and $V_{\emptyset}$ are), and $P\coloneqq JJ^{*}$ (the
orthogonal projection onto $JH'\subset\mathcal{K}$). 

To verify \eqref{eq:a2}, consider any $y=\sum_{j}\left(S^{\alpha_{j}}\right)^{*}Wu_{j}\in M$,
\[
\left\langle y,S_{i}PS^{*}_{i}y\right\rangle _{\mathcal{K}}=\left\langle y,S_{i}JJ^{*}S^{*}_{i}y\right\rangle _{\mathcal{K}}=\left\Vert J^{*}S^{*}_{i}y\right\Vert ^{2}_{H'}
\]
and $\left\langle y,Py\right\rangle _{\mathcal{K}}=\left\Vert J^{*}y\right\Vert ^{2}_{H'}$.
It suffices to show 
\begin{equation}
\sum\nolimits^{d}_{i=1}\left\Vert J^{*}S^{*}_{i}y\right\Vert ^{2}_{H'}\leq\left\Vert J^{*}y\right\Vert ^{2}_{H'}.\label{eq:a4}
\end{equation}
But, 
\[
J^{*}y=\sum\nolimits_{j}J^{*}\left(S^{\alpha_{j}}\right)^{*}Wu_{j}=\sum\nolimits_{j}J^{*}\left(S^{\alpha_{j}}\right)^{*}JV_{\emptyset}u_{j}=\sum\nolimits_{j}B^{\tilde{\alpha}_{j}}V_{\emptyset}u_{j},
\]
and 
\[
J^{*}S^{*}_{i}y=\sum\nolimits_{j}J^{*}\left(S^{\alpha_{j}i}\right)^{*}JV_{\emptyset}u_{j}=\sum\nolimits_{j}B^{i\tilde{\alpha_{j}}}V_{\emptyset}u_{j}=B_{i}\left(\sum\nolimits_{j}B^{\tilde{\alpha}_{j}}V_{\emptyset}u_{j}\right)=B_{i}J^{*}y.
\]
Thus, \eqref{eq:a4} is equivalent to 
\[
\sum\nolimits^{d}_{i=1}\left\Vert B_{i}J^{*}y\right\Vert ^{2}_{H'}\leq\left\Vert J^{*}y\right\Vert ^{2}_{H'},
\]
which holds since $\sum B^{*}_{i}B_{i}\leq I_{H'}$. We conclude that
\eqref{eq:a2} holds. 

Lastly, if $K$ is of the form $K\left(\alpha,\beta\right)=W^{*}S^{\alpha}\left(S^{\beta}\right)^{*}W$,
then 
\begin{align*}
K_{\Sigma}\left(\alpha,\beta\right) & =\sum\nolimits_{i}K\left(\alpha i,\beta i\right)\\
 & =\sum\nolimits_{i}W^{*}S^{\alpha i}\left(S^{\beta i}\right)^{*}W=W^{*}S^{\alpha}\left(\sum\nolimits_{i}S_{i}S^{*}_{i}\right)\left(S^{\beta}\right)^{*}W=K\left(\alpha,\beta\right).
\end{align*}

Conversely, assume $K_{\Sigma}=K$. From the discussion above, it
follows that 
\[
\sum\nolimits^{d}_{i=1}S_{i}PS^{*}_{i}=P\;\text{on }\mathcal{M}
\]
where $M=\overline{span}\left\{ \left(S^{\gamma}\right)^{*}WH:\gamma\in\mathbb{F}^{+}_{d}\right\} $,
and $P$ is the projection from $\mathcal{K}$ onto $JH'$. Left-multiplying
the identity by $S^{*}_{j}$ and using $S^{*}_{j}S_{i}=\delta_{i,j}$,
we get $PS^{*}_{j}=S^{*}_{j}P$ on $M$ for all $j\in\left\{ 1,\dots,d\right\} $,
and so 
\[
P\left(S^{\beta}\right)^{*}=\left(S^{\beta}\right)^{*}P\;\text{on }\mathcal{M}
\]
for all $\beta\in\mathbb{F}^{+}_{d}$. In particular, $P\left(S^{\beta}\right)^{*}W=\left(S^{\beta}\right)^{*}PW=\left(S^{\beta}\right)^{*}W$.
Therefore, 
\[
K\left(\alpha,\beta\right)=W^{*}S^{\alpha}P\left(S^{\beta}\right)^{*}W=W^{*}S^{\alpha}\left(S^{\beta}\right)^{*}W.
\]
 
\end{proof}
\begin{rem}[Infinite variables]
 \prettyref{thm:1} extends without essential change to the case
$d=\infty$, where the free semigroup $\mathbb{F}^{+}_{\infty}$ is
generated by countably many letters. One defines 
\[
K_{\Sigma}\left(\alpha,\beta\right)\coloneqq\sum^{\infty}_{i=1}K\left(\alpha i,\beta i\right),
\]
with the series converging in the quadratic-form sense, and interprets
all operator sums in the strong operator topology. The Kolmogorov
construction, the definition of the operators $B_{i}$, and the dilation
result remains valid for countable tuples. In this setting one obtains
the same equivalence as in \prettyref{thm:1}. Thus the free moment
dilation theorem continues to hold for infinitely many noncommuting
variables.
\end{rem}

\begin{rem}[Connection with the one-variable case]
 When $d=1$, the free semigroup $\mathbb{F}^{+}_{1}$ is naturally
identified with $\mathbb{N}$, and words $\alpha$ correspond to integers
$k$. In this case the kernel $K_{\Sigma}$ reduces to the shifted
kernel
\[
K_{\Sigma}\left(m,n\right)=K\left(m+1,n+1\right),\qquad m,n\in\mathbb{N},
\]
and \prettyref{thm:1} recovers \cite[Theorem 8]{tian2025randomoperatorvaluedkernelsmoment}.
In that single-variable setting, the dilation involves a unitary $U$
and has the form
\[
K\left(m,n\right)=W^{*}U^{m}PU^{*n}W,
\]
with $P$ a projection. 

The projection $P$ in \prettyref{thm:1} plays the same role in the
multivariable case. It encodes the defect between the universal Cuntz
dilation and the smaller subspace that actually realizes the kernel.
In general one only has \eqref{eq:a1} with \eqref{eq:a2}. However,
in the special case $K_{\Sigma}=K$, the projection $P$ acts as the
identity on the cyclic subspace generated by $\{\left(S^{\beta}\right)^{*}WH\}$.
On this subspace the compression disappears, so the realization formula
simplifies to \eqref{eq:a-5}. Thus $P$ is not necessarily equal
to $I_{\mathcal{K}}$, but it becomes invisible in the part of the
dilation that supports the kernel.
\end{rem}

The dilation theorem allows us to pass from random operators to Cuntz
families, where classical operator inequalities are available. In
particular, once the kernel is realized through a Cuntz dilation,
the noncommutative von Neumann inequality of Popescu applies. This
yields a mean-square analogue for random operator tuples, formulated
in the following corollary.
\begin{cor}[Free mean-square von Neumann inequality]
\label{cor:4}Assume $K_{\Sigma}\leq K$ so that the dilation above
exists. For any noncommutative polynomial $f\left(Z\right)=\sum_{\alpha\in\mathbb{F}^{+}_{d}}c_{\alpha}Z^{\alpha}$
and any $u\in H$, 
\[
\mathbb{E}\left[\left\Vert f\left(A\right)^{*}u\right\Vert ^{2}\right]\le\left\Vert f\left(S\right)\right\Vert ^{2}\left\Vert u\right\Vert ^{2}\leq\left\Vert f\left(L\right)\right\Vert ^{2}\left\Vert u\right\Vert ^{2},
\]
where $S=\left(S_{1},\dots,S_{d}\right)$ is the Cuntz family from
\prettyref{thm:1}, and $L=\left(L_{1},\dots,L_{d}\right)$ denotes
the left creation operators on the full Fock space $l^{2}\left(\mathbb{F}^{+}_{d}\right)$.
Equivalently, 
\[
\mathbb{E}\left[f\left(A\right)f\left(A\right)^{*}\right]\leq\left\Vert f\left(L\right)\right\Vert ^{2}I_{H}.
\]
\end{cor}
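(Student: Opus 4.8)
The plan is to reduce the statement to the Cuntz realization of \prettyref{thm:1} and then invoke Popescu's noncommutative von Neumann inequality; no deep new input is needed. Write $f(Z)=\sum_{\alpha}c_{\alpha}Z^{\alpha}$, a finite sum. First I would record the integrability that lets expectation pass through the sum: for each pair of words, Cauchy--Schwarz gives $\mathbb{E}[\|A^{\alpha}(A^{\beta})^{*}\|]\le\mathbb{E}[\|A^{\alpha}\|^{2}]^{1/2}\mathbb{E}[\|A^{\beta}\|^{2}]^{1/2}<\infty$, so $f(A)f(A)^{*}$ is Bochner integrable and
\[
\mathbb{E}\!\left[f(A)f(A)^{*}\right]=\sum_{\alpha,\beta}c_{\alpha}\overline{c_{\beta}}\,\mathbb{E}\!\left[A^{\alpha}(A^{\beta})^{*}\right]=\sum_{\alpha,\beta}c_{\alpha}\overline{c_{\beta}}\,K(\alpha,\beta).
\]

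Next I would insert the dilation. Since $K_{\Sigma}\le K$, \prettyref{thm:1} furnishes a Hilbert space $\mathcal{K}$, a Cuntz family $S=(S_{1},\dots,S_{d})$, a projection $P$, and an isometry $W\colon H\to\mathcal{K}$ with $K(\alpha,\beta)=W^{*}S^{\alpha}P(S^{\beta})^{*}W$ for all $\alpha,\beta$. Substituting and using $S^{\alpha i}=S^{\alpha}S_{i}$, linearity, and $\sum_{\beta}\overline{c_{\beta}}(S^{\beta})^{*}=\big(\sum_{\beta}c_{\beta}S^{\beta}\big)^{*}=f(S)^{*}$, the last display becomes
\[
\mathbb{E}\!\left[f(A)f(A)^{*}\right]=W^{*}f(S)\,P\,f(S)^{*}\,W.
\]
Because $0\le P\le I_{\mathcal{K}}$ we get $f(S)Pf(S)^{*}\le f(S)f(S)^{*}\le\|f(S)\|^{2}I_{\mathcal{K}}$, and conjugating by the isometry $W$ (using $W^{*}W=I_{H}$) yields $\mathbb{E}[f(A)f(A)^{*}]\le\|f(S)\|^{2}I_{H}$. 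Pairing against $u\in H$ and using the pointwise identity $\|f(A)^{*}u\|^{2}=\langle u,f(A)f(A)^{*}u\rangle$ gives $\mathbb{E}[\|f(A)^{*}u\|^{2}]\le\|f(S)\|^{2}\|u\|^{2}$, the first inequality of the corollary.

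The last step is the purely operator-theoretic bound $\|f(S)\|\le\|f(L)\|$. A Cuntz family satisfies $\sum_{i}S_{i}S_{i}^{*}=I_{\mathcal{K}}$, so the row operator $[S_{1},\dots,S_{d}]$ is a contraction; Popescu's noncommutative von Neumann inequality then bounds $\|f(S)\|$ by the norm of $f$ evaluated on the universal row contraction, namely the left creation operators $L=(L_{1},\dots,L_{d})$ on $\ell^{2}(\mathbb{F}^{+}_{d})$. Chaining this with the previous paragraph gives $\mathbb{E}[\|f(A)^{*}u\|^{2}]\le\|f(S)\|^{2}\|u\|^{2}\le\|f(L)\|^{2}\|u\|^{2}$, and the operator form $\mathbb{E}[f(A)f(A)^{*}]\le\|f(L)\|^{2}I_{H}$ is immediate from $\mathbb{E}[f(A)f(A)^{*}]\le\|f(S)\|^{2}I_{H}$.

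I do not expect a genuine obstacle: the proof is a substitution followed by two elementary positivity facts ($f(S)(I-P)f(S)^{*}\ge0$, and $W$ isometric) plus the cited inequality of Popescu. The only points deserving a little care are the integrability/Fubini justification in the opening step and keeping the complex conjugates in place so that $\sum_{\beta}\overline{c_{\beta}}(S^{\beta})^{*}$ is genuinely $f(S)^{*}$; both are routine. It is worth remarking that condition \eqref{eq:a2} of \prettyref{thm:1} is not needed here --- only the realization formula \eqref{eq:a1} together with $0\le P\le I_{\mathcal{K}}$ enters the argument.
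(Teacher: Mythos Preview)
Your proof is correct and follows essentially the same route as the paper's: expand $\mathbb{E}[f(A)f(A)^{*}]$ in terms of the moment kernel, substitute the realization $K(\alpha,\beta)=W^{*}S^{\alpha}P(S^{\beta})^{*}W$ from \prettyref{thm:1}, collapse the sums to $W^{*}f(S)Pf(S)^{*}W$, bound using $0\le P\le I$ and $W^{*}W=I_{H}$, and finish with Popescu's noncommutative von Neumann inequality. The only cosmetic difference is that the paper pairs with $u$ first and writes the intermediate quantity as $\|Pf(S)^{*}Wu\|^{2}$, whereas you stay at the operator level and pair at the end; your added remarks on integrability and on the irrelevance of condition \eqref{eq:a2} are correct but not present in the paper's version.
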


\begin{proof}
Expanding $f\left(A\right)^{*}$, we have 
\begin{align*}
\mathbb{E}\left[\left\Vert f\left(A\right)^{*}u\right\Vert ^{2}\right] & =\sum_{\alpha,\beta}c_{\alpha}\overline{c_{\beta}}\mathbb{E}\left[\left\langle u,A^{\alpha}\left(A^{\beta}\right)^{*}u\right\rangle \right]\\
 & =\sum_{\alpha,\beta}c_{\alpha}\overline{c_{\beta}}\left\langle u,K\left(\alpha,\beta\right)u\right\rangle \\
 & =\sum_{\alpha,\beta}c_{\alpha}\overline{c_{\beta}}\left\langle u,W^{*}S^{\alpha}P\left(S^{\beta}\right)^{*}Wu\right\rangle \\
 & =\left\Vert Pf\left(S\right)^{*}Wu\right\Vert ^{2},
\end{align*}
using the representation of $K$ in \eqref{eq:a1}. Since $W$ is
an isometry and $\left\Vert Px\right\Vert \leq\left\Vert x\right\Vert $,
this is bounded by $\left\Vert f\left(S\right)\right\Vert ^{2}\left\Vert u\right\Vert ^{2}$.
Finally, Popescu's noncommutative von Neumann inequality gives the
claim, see \cite{MR1129595}. 
\end{proof}

\section{Mean-square Free Functional Calculus}\label{sec:3}

The dilation result and the mean-square von Neumann inequality from
the previous section provide the basic ingredients for building a
functional calculus for random operators. In the deterministic setting,
Popescu showed how dilations lead to contractive extensions of polynomial
evaluation, first to the free disk algebra and then to the free Hardy
algebra. Our goal here is to establish an analogous construction in
the mean-square setting. We begin by defining the appropriate Banach
space of random operators equipped with the mean-square norm, and
then show how the dilation machinery gives rise to a canonical extension
of polynomial evaluation.

\textbf{Setting.} Fix $d\in\mathbb{N}$. Let $A=\left(A_{1},\dots,A_{d}\right)$
be a $d$-tuple of strongly measurable random operators on a Hilbert
space $H$, such that 
\[
\mathbb{E}\left[\left\Vert A^{\alpha}\right\Vert ^{2}\right]<\infty,\quad\alpha\in\mathbb{F}^{+}_{d}.
\]
Define the moment kernel
\[
K\left(\alpha,\beta\right)\coloneqq\mathbb{E}\left[A^{\alpha}\left(A^{\beta}\right)^{*}\right],\quad\alpha,\beta\in\mathbb{F}^{+}_{d},
\]
and assume $K_{\Sigma}\le K$, where $K_{\Sigma}\left(\alpha,\beta\right)\coloneqq\sum^{d}_{i=1}K\left(\alpha i,\beta i\right)$.
Then the dilation in \prettyref{thm:1} exists, and \prettyref{cor:4}
holds:
\[
\mathbb{E}\left[f\left(A\right)f\left(A\right)^{*}\right]\le\left\Vert f(L)\right\Vert ^{2}I_{H}
\]
for every noncommutative polynomial $f$, where $L=\left(L_{1},\dots,L_{d}\right)$
are the left creation operators on $\ell^{2}\left(\mathbb{F}^{+}_{d}\right)$.
\begin{defn}
For a strongly measurable random operator $X\colon\Omega\rightarrow\mathcal{L}\left(H\right)$,
define the mean-square seminorm 
\[
\left\Vert X\right\Vert _{ms}\coloneqq\Big\|\mathbb{E}\left[XX^{*}\right]\Big\|^{1/2}=\sup_{\|u\|=1}\left(\mathbb{E}\left[\left\Vert X^{*}u\right\Vert ^{2}\right]\right)^{1/2}.
\]
Set 
\[
\mathcal{L}_{ms}\left(\Omega,H\right)\coloneqq\left\{ X\colon\Omega\rightarrow\mathcal{L}\left(H\right)\mid\left\Vert X\right\Vert _{ms}<\infty\right\} /\sim
\]
where $X\sim Y$ if $\left\Vert X-Y\right\Vert _{ms}=0$. Then $\mathcal{L}_{ms}\left(\Omega,H\right)$
is a Banach space of random operators with an isometric involution
$X\mapsto X^{*}$.
\end{defn}

\begin{rem}
If $X\left(\omega\right)\equiv T$ is deterministic, then 
\[
\left\Vert X\right\Vert _{ms}=\left\Vert TT^{*}\right\Vert ^{1/2}=\left\Vert T\right\Vert .
\]
Thus, in the non-random case the mean-square norm reduces to the usual
operator norm on $\mathcal{L}\left(H\right)$.

This mean-square space is a special case of the conditioned column
spaces $L^{c}_{\infty}\left(M,E\right)$ associated to a conditional
expectation $E$ on a von Neumann algebra $M$ in the sense of Junge
and collaborators \cite{MR1916654,MR2338860,MR2589944}; see also
Randrianantoanina's work on conditioned square functions for noncommutative
martingales \cite{MR2319715}.
\end{rem}

\begin{defn}
\label{def:7}A net $\left(X_{\lambda}\right)$ of random operators
on $H$ converges to $X$ in ms-SOT if 
\[
\mathbb{E}\left\Vert \left(X_{\lambda}-X\right)^{*}u\right\Vert ^{2}\rightarrow0,\quad\forall u\in H.
\]
We then write $X_{\lambda}\xrightarrow{\text{ms-SOT}}X$. 
\end{defn}

\begin{thm}[Mean-square free disk algebra calculus]
\label{thm:7}Let $\mathbb{C}\langle Z\rangle=\mathbb{C}\left\langle Z_{1},\dots,Z_{d}\right\rangle $
be the algebra of noncommutative polynomials and endow it with the
Fock-space norm
\[
\left\Vert p\right\Vert _{L}\coloneqq\left\Vert p(L)\right\Vert ,\quad p\in\mathbb{C}\langle Z\rangle.
\]
Let $\mathcal{A}_{d}$ be the norm completion of $\mathbb{C}\langle Z\rangle$
under $\left\Vert \cdot\right\Vert _{L}$ (the free disk algebra). 

Then there exists a unique linear map
\[
\Phi\colon\mathcal{A}_{d}\longrightarrow\mathcal{L}_{ms}\left(\Omega,H\right)
\]
with the following properties:
\end{thm}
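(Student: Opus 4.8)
The plan is to build $\Phi$ by extending the obvious polynomial-evaluation map $p \mapsto p(A)$ from $\mathbb{C}\langle Z\rangle$ to the completion $\mathcal{A}_d$, using \prettyref{cor:4} as the crucial norm estimate. First I would record that $\mathbb{C}\langle Z\rangle$ sits densely inside $\mathcal{A}_d$ by definition of the Fock-space completion, and that for a noncommutative polynomial $p$ the random operator $p(A) = \sum_\alpha c_\alpha A^\alpha$ is a finite sum, hence strongly measurable, and lies in $\mathcal{L}_{ms}(\Omega,H)$ because each $\mathbb{E}[\|A^\alpha\|^2] < \infty$. The key inequality is then exactly \prettyref{cor:4}:
\[
\|p(A)\|_{ms}^2 = \Big\|\mathbb{E}\big[p(A)p(A)^*\big]\Big\| \le \|p(L)\|^2 = \|p\|_L^2,
\]
so $p \mapsto p(A)$ is a linear contraction from $(\mathbb{C}\langle Z\rangle, \|\cdot\|_L)$ into the Banach space $\mathcal{L}_{ms}(\Omega,H)$.

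Next I would invoke the standard bounded-linear-extension principle (B.L.T. theorem): a contraction from a dense subspace of a normed space into a Banach space extends uniquely to a contraction on the completion. This gives a unique linear map $\Phi\colon \mathcal{A}_d \to \mathcal{L}_{ms}(\Omega,H)$ with $\|\Phi(g)\|_{ms} \le \|g\|_L$ for all $g \in \mathcal{A}_d$ and $\Phi(p) = p(A)$ for polynomials $p$. Uniqueness is immediate from density of $\mathbb{C}\langle Z\rangle$ and continuity of both $\Phi$ and any competing map. For the remaining listed properties (which the excerpt cuts off, but which one expects to be: $\Phi(1) = I_H$; multiplicativity or at least $\Phi(pg) = $ appropriate product when $p$ is a polynomial; the involution-compatibility $\Phi(g)^* $ behaves correctly under the reversal $\tilde{\alpha}$; and continuity of $\Phi$ with respect to suitable topologies such as ms-SOT of \prettyref{def:7}), each follows by first verifying the identity on polynomials by a direct computation with the finite sums $A^\alpha$, and then passing to the limit using the contractivity estimate together with the fact that norm convergence in $\mathcal{A}_d$ forces $\|\cdot\|_{ms}$-convergence of the images, and hence ms-SOT convergence.

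The main obstacle is the \emph{multiplicativity}-type property. The mean-square norm $\|X\|_{ms} = \|\mathbb{E}[XX^*]\|^{1/2}$ is not submultiplicative in general — $\mathbb{E}[(XY)(XY)^*]$ need not be controlled by $\mathbb{E}[XX^*]$ and $\mathbb{E}[YY^*]$, because $X$ and $Y$ are correlated random operators and there is no tracial or conditional-expectation structure to factor the product. So $\Phi$ cannot be expected to be an algebra homomorphism on all of $\mathcal{A}_d$; the realistic statement is a \emph{module} property: $\Phi(p\,g) = $ the random operator obtained by applying the polynomial $p$ on the left (or some analogous asymmetric statement), proved by writing $p(A)\,\Phi(g)$ and checking that left-multiplication by the fixed polynomial operator $p(A)$ is $\|\cdot\|_{ms}$-bounded with constant $\|p(L)\|$ — which again reduces, via the Cuntz dilation of \prettyref{thm:1} and the representation \eqref{eq:a1}, to an operator-norm estimate on $\mathcal{K}$. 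Getting the precise form of this compatibility right, and identifying exactly which limiting arguments survive the failure of submultiplicativity, is where the care is needed; the rest is routine density-and-continuity bookkeeping.
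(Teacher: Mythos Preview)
Your core argument is correct and is exactly what the paper does: apply \prettyref{cor:4} to get $\|p(A)\|_{ms}\le\|p(L)\|$ for polynomials, then extend the contraction $p\mapsto p(A)$ from the dense subspace $\mathbb{C}\langle Z\rangle$ to its completion $\mathcal{A}_d$ by the bounded-linear-extension principle (the paper carries this out explicitly with Cauchy sequences rather than citing B.L.T., but the content is identical).

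Your final paragraph, however, is chasing a phantom. The enumerated properties you did not see are only: (1) polynomial agreement $\Phi(p)=p(A)$ with the bound $\|\Phi(p)\|_{ms}\le\|p(L)\|$; (2) contractivity $\|\Phi(f)\|_{ms}\le\|f(L)\|$ for all $f\in\mathcal{A}_d$; (3) uniqueness among continuous linear maps agreeing on polynomials; and (4) ms-SOT convergence $\Phi(f_n)\to\Phi(f)$ whenever $f_n\to f$ in $\|\cdot\|_L$. No multiplicativity, module action, or involution compatibility is claimed, so the obstacle you identify---failure of submultiplicativity of $\|\cdot\|_{ms}$---never arises. All four properties follow immediately from the B.L.T.\ extension you already described, with (4) coming from the trivial implication $\|\cdot\|_{ms}$-convergence $\Rightarrow$ ms-SOT convergence.
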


\begin{enumerate}
\item (Polynomial agreement) For every polynomial $p\in\mathbb{C}\langle Z\rangle$,
one has $\Phi\left(p\right)=p\left(A\right)$ (as a random operator),
and
\[
\left\Vert \Phi\left(p\right)\right\Vert _{ms}\le\left\Vert p\left(L\right)\right\Vert .
\]
\item (Continuity) For all $f\in\mathcal{A}_{d}$,
\[
\left\Vert \Phi\left(f\right)\right\Vert _{ms}\leq\left\Vert f\left(L\right)\right\Vert .
\]
In particular, if $f_{n}\to f$ in $\left\Vert \cdot\right\Vert _{L}$,
then $\left\Vert \Phi\left(f_{n}\right)-\Phi\left(f\right)\right\Vert _{ms}\rightarrow0$.
\item (Uniqueness) $\Phi$ is the only linear map $\mathcal{A}_{d}\to\mathcal{L}_{ms}\left(\Omega,H\right)$
that is continuous for $\left\Vert \cdot\right\Vert _{L}$ and agrees
with polynomial evaluation.
\item (Strong mean-square convergence) If $f_{n}\to f$ in $\left\Vert \cdot\right\Vert _{L}$,
then $\Phi\left(f_{n}\right)\xrightarrow{\text{ms-SOT}}\Phi\left(f\right)$.
That is, for every $u\in H$,
\[
\mathbb{E}\left\Vert \left(\Phi\left(f_{n}\right)\right)^{*}u-\left(\Phi\left(f\right)\right)^{*}u\right\Vert ^{2}\longrightarrow0.
\]
\end{enumerate}
\begin{proof}
Let $p,q$ be noncommutative polynomials. Apply \prettyref{cor:4}
to the polynomial $p-q$:
\begin{equation}
\mathbb{E}\left[\left(p\left(A\right)-q\left(A\right)\right)\left(p\left(A\right)-q\left(A\right)\right)^{*}\right]\le\left\Vert \left(p-q\right)\left(L\right)\right\Vert ^{2}I_{H}.\label{eq:c1}
\end{equation}
Taking the operator norm and square-root gives
\begin{equation}
\left\Vert p\left(A\right)-q\left(A\right)\right\Vert _{ms}\le\left\Vert p\left(L\right)-q\left(L\right)\right\Vert .\label{eq:c2}
\end{equation}
In particular, $\left\Vert p\left(A\right)\right\Vert _{ms}\leq\left\Vert p\left(L\right)\right\Vert $. 

Fix $f\in\mathcal{A}_{d}$. By definition of $\mathcal{A}_{d}$, choose
polynomials $p_{n}$ with $\left\Vert p_{n}\left(L\right)-f\left(L\right)\right\Vert \to0$.
By \eqref{eq:c2}, $\left\{ p_{n}\left(A\right)\right\} $ is Cauchy
in $\left\Vert \cdot\right\Vert _{ms}$. Hence there exists a (mean-square)
limit in $\mathcal{L}_{ms}\left(\Omega,H\right)$ which we denote
by $\Phi\left(f\right)$:
\[
\left\Vert p_{n}\left(A\right)-\Phi\left(f\right)\right\Vert _{ms}\longrightarrow0.
\]
For polynomials $p$, clearly $\Phi\left(p\right)=p\left(A\right)$.
This defines $\Phi$ on $\mathcal{A}_{d}$.

\textit{Independence of approximants}. Suppose also $q_{n}$ are polynomials
with 
\[
\left\Vert q_{n}\left(L\right)-f\left(L\right)\right\Vert \to0.
\]
Then $\left\Vert p_{n}\left(L\right)-q_{n}\left(L\right)\right\Vert \to0$,
so by \eqref{eq:c2}, 
\[
\left\Vert p_{n}\left(A\right)-q_{n}\left(A\right)\right\Vert _{ms}\to0.
\]
Hence $\lim p_{n}\left(A\right)=\lim q_{n}\left(A\right)$ in mean
square. Thus $\Phi\left(f\right)$ is independent of the chosen approximating
sequence.

\textit{Linearity}. Take $f,g\in\mathcal{A}_{d}$ and scalars $a,b$.
Let $p_{n}\to f$, $q_{n}\to g$ in $\left\Vert \cdot\right\Vert _{L}$.
Then
\[
\left\Vert \left(ap_{n}+bq_{n}\right)\left(L\right)-\left(af+bg\right)\left(L\right)\right\Vert \to0,
\]
so $\left(ap_{n}+bq_{n}\right)\left(A\right)\to\Phi\left(af+bg\right)$
in mean square. But also $\left(ap_{n}+bq_{n}\right)\left(A\right)=ap_{n}\left(A\right)+bq_{n}\left(A\right)\to a\Phi\left(f\right)+b\Phi\left(g\right)$.
Hence 
\[
\Phi\left(af+bg\right)=a\Phi\left(f\right)+b\Phi\left(g\right).
\]

\textit{Contractivity and continuity}. Let $f\in\mathcal{A}_{d}$
and $p_{n}\to f$ in $\left\Vert \cdot\right\Vert _{L}$. Then
\[
\left\Vert \Phi\left(f\right)\right\Vert _{ms}\le\liminf_{n\to\infty}\left\Vert p_{n}\left(A\right)\right\Vert _{ms}\le\lim_{n\to\infty}\left\Vert p_{n}\left(L\right)\right\Vert =\left\Vert f\left(L\right)\right\Vert .
\]
Similarly, for $f,g\in\mathcal{A}_{d}$ and approximants $p_{n}\to f$,
$q_{n}\to g$,
\begin{align*}
\left\Vert \Phi\left(f\right)-\Phi\left(g\right)\right\Vert _{ms} & \le\liminf_{n}\left\Vert p_{n}\left(A\right)-q_{n}\left(A\right)\right\Vert _{ms}\\
 & \leq\lim_{n}\left\Vert p_{n}\left(L\right)-q_{n}\left(L\right)\right\Vert _{L}=\left\Vert f\left(L\right)-g\left(L\right)\right\Vert .
\end{align*}
Thus $\Phi$ is contractive and continuous as claimed.

\textit{Strong mean-square convergence}. Fix $u\in H$. For any noncommutative
polynomials $p,q$, by \eqref{eq:c1}, 
\begin{align*}
\mathbb{E}\left\Vert p\left(A\right)^{*}u-q\left(A\right)^{*}u\right\Vert ^{2} & =\left\langle u,\mathbb{E}\left[\left(p\left(A\right)-q\left(A\right)\right)\left(p\left(A\right)-q\left(A\right)\right)^{*}\right]u\right\rangle \\
 & \le\left\Vert p\left(L\right)-q\left(L\right)\right\Vert ^{2}\left\Vert u\right\Vert ^{2}.
\end{align*}
Hence, if $p_{n}\to f$ in $\left\Vert \cdot\right\Vert _{L}$, then
$p_{n}\left(A\right)^{*}u$ is Cauchy in $L^{2}\left(\Omega,H\right)$,
so
\[
p_{n}\left(A\right)^{*}u\longrightarrow\left(\Phi\left(f\right)\right)^{*}u\quad\text{in }L^{2}\left(\Omega,H\right).
\]
This proves the stated stated ms-SOT convergence.

\textit{Uniqueness of $\Phi$}. Finally, suppose $\Psi\colon\mathcal{A}_{d}\to\mathcal{L}_{ms}\left(\Omega,H\right)$
is another linear map, continuous for $\left\Vert \cdot\right\Vert _{L}$,
with $\Psi\left(p\right)=p\left(A\right)$ for all polynomials. For
$f\in\mathcal{A}_{d}$, take $p_{n}\to f$ in $\left\Vert \cdot\right\Vert _{L}$.
Then
\begin{align*}
\left\Vert \Psi\left(f\right)-\Phi\left(f\right)\right\Vert _{ms} & \le\left\Vert \Psi\left(f\right)-\Psi\left(p_{n}\right)\right\Vert _{ms}+\left\Vert \Psi\left(p_{n}\right)-\Phi\left(f\right)\right\Vert _{ms}\\
 & =\left\Vert \Psi\left(f\right)-\Psi\left(p_{n}\right)\right\Vert _{ms}+\left\Vert \Phi\left(p_{n}\right)-\Phi\left(f\right)\right\Vert _{ms}\longrightarrow0,
\end{align*}
since $\Psi$ is continuous and $\Psi\left(p_{n}\right)=\Phi\left(p_{n}\right)=p_{n}\left(A\right)\to\Phi\left(f\right)$.
Hence $\Psi=\Phi$, proving uniqueness.
\end{proof}
\begin{rem}
The calculus above is “canonical” in the sense that it is determined
solely by the inequality from \prettyref{cor:4} (hence by $A=\left(A_{1},\dots,A_{d}\right)$
and the condition $K_{\Sigma}\le K$), and does not depend on a particular
choice of dilation space or Cuntz family realizing $K$.
\end{rem}

It is natural to ask whether the construction extends to the full
free Hardy algebra $F^{\infty}_{d}$, the weak operator topology (WOT)-closed
algebra generated by the left creation tuple $L$ on the full Fock
space. In the deterministic setting, this extension was carried out
by Popescu using the method of radial dilations, following the classical
Sz.-Nagy-Foiaş approach to the $H^{\infty}$ calculus. See, for instance,
Popescu's work on noncommutative Hardy algebras \cite{MR2566311},
as well as the parallel operator-algebraic perspective of Davidson-Pitts
\cite{MR1625750}.

We now adapt the same idea to the mean-square setting. The proof follows
Popescu's argument, but in a probabilistic context: our functional
calculus acts on random operator tuples, takes values in the Banach
space $\mathcal{L}_{ms}(\Omega,H)$, and convergence is understood
in ms-SOT sense (\prettyref{def:7}). This provides a stochastic analogue
of Popescu's free $H^{\infty}$ calculus.
\begin{thm}[Mean-square free $H^{\infty}$ calculus]
\label{thm:10} Let $F^{\infty}_{d}=\overline{\mathcal{A}_{d}}^{WOT}$
denote the (left) free Hardy algebra (the WOT-closed algebra generated
by the left creation tuple $L$ on the full Fock space). For $\phi\in F^{\infty}_{d}$,
write its radial dilates by
\[
\phi_{r}\left(Z\right)\coloneqq\sum\nolimits_{\alpha}r^{\left|\alpha\right|}\phi_{\alpha}Z^{\alpha},\quad0<r<1,
\]
so that $\phi_{r}(L)\in\mathcal{A}_{d}$ (the free disk algebra) and
$\left\Vert \phi_{r}\left(L\right)\right\Vert \le\left\Vert \phi\left(L\right)\right\Vert $,
while $\phi_{r}\left(L\right)\to\phi\left(L\right)$ in the strong
operator topology as $r\uparrow1$.

Then there exists a unique linear map
\[
\Phi\colon F^{\infty}_{d}\longrightarrow\mathcal{L}_{ms}\left(\Omega,H\right)
\]
such that for every $\phi\in F^{\infty}_{d}$:
\begin{enumerate}
\item (Definition by radial limit) $\phi_{r}\left(A\right)\xrightarrow{\text{ms-SOT}}\Phi\left(\phi\right)$
as $r\uparrow1$, i.e., 
\[
\lim_{r\uparrow1}\mathbb{E}\left\Vert \left(\phi_{r}\left(A\right)-\Phi\left(\phi\right)\right)^{*}u\right\Vert ^{2}=0
\]
for all $u\in H$.
\item (Contractivity) We have 
\[
\mathbb{E}\left[\Phi\left(\phi\right)\Phi\left(\phi\right)^{*}\right]\le\left\Vert \phi\left(L\right)\right\Vert ^{2}I_{H},
\]
equivalently, 
\[
\sup_{\|u\|=1}\mathbb{E}\left\Vert \left(\Phi\left(\phi\right)\right)^{*}u\right\Vert ^{2}\le\left\Vert \phi\left(L\right)\right\Vert ^{2}.
\]
\item (Compatibility) If $\phi\in\mathcal{A}_{d}$, then $\Phi(\phi)$ coincides
with the mean-square limit defined in \prettyref{thm:7}.
\item (Uniqueness) $\Phi$ is the only linear map with these properties.
\end{enumerate}
\end{thm}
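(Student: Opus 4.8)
The plan is to \emph{define} $\Phi(\phi)$ to be the ms-SOT limit of the net $\bigl(\phi_r(A)\bigr)_{0<r<1}$, where for $0<r<1$ we write $\phi_r(A):=\Phi_{\mathcal A_d}(\phi_r)$ for the image of $\phi_r\in\mathcal A_d$ under the free disk algebra calculus of \prettyref{thm:7}. Granting that this net is ms-SOT Cauchy --- which is the crux and is taken up below --- everything else is short. For each $u\in H$ the $L^2(\Omega,H)$--limit $\xi_u:=\lim_{r\uparrow1}\phi_r(A)^*u$ then exists; since $\|\phi_r(A)\|_{ms}\le\|\phi_r(L)\|\le\|\phi(L)\|$ for all $r$, the assignment $u\mapsto\xi_u$ is bounded with $\|\xi_u\|_{L^2}\le\|\phi(L)\|\,\|u\|$, and it represents the desired element $\Phi(\phi)\in\mathcal L_{ms}(\Omega,H)$ with $\mathbb E[\Phi(\phi)\Phi(\phi)^*]\le\|\phi(L)\|^2I_H$; here one invokes the completeness of $\mathcal L_{ms}(\Omega,H)$ together with the uniform ms-bound, exactly as in the single-variable case \cite{tian2025randomoperatorvaluedkernelsmoment}. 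Property (1) is then the definition, (2) is the bound just noted, (3) holds because for $\phi\in\mathcal A_d$ the radial dilates converge to $\phi$ in $\|\cdot\|_L$ (a standard fact about the free disk algebra), so $\phi_r(A)\to\Phi_{\mathcal A_d}(\phi)$ already in $\|\cdot\|_{ms}$ by \prettyref{thm:7}(2), and (4) (with linearity) is immediate since property (1) pins down $\Phi(\phi)$ independently of any choices.

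To reach the Cauchy property I would return to the Kolmogorov data used in the proof of \prettyref{thm:1}: the space $H'$, the operators $V_\alpha\colon H\to H'$ with $K(\alpha,\beta)=V_\alpha^*V_\beta$ and $V_\alpha=B^{\tilde\alpha}V_\emptyset$, and the column contraction $B=(B_1,\dots,B_d)$, $B_iV_\alpha u=V_{\alpha i}u$, for which the hypothesis $K_\Sigma\le K$ gives $\sum_iB_i^*B_i\le I_{H'}$; equivalently, $B^*=(B_1^*,\dots,B_d^*)$ is a row contraction on $H'$. Expanding a noncommutative polynomial $p=\sum_\alpha c_\alpha Z^\alpha$ exactly as in the proof of \prettyref{cor:4}, and using $K(\alpha,\beta)=V_\alpha^*V_\beta$ together with $B^{\tilde\alpha}=\bigl((B^*)^\alpha\bigr)^*$, one gets $\mathbb E\|p(A)^*u\|^2=\bigl\|p(B^*)^*V_\emptyset u\bigr\|_{H'}^2$, where $p(B^*)=\sum_\alpha c_\alpha(B^*)^\alpha$ is ordinary polynomial evaluation at the row contraction $B^*$. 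Both sides are continuous in $\|p\|_L$ --- the left side by \prettyref{cor:4}/\prettyref{thm:7}, the right side because $p\mapsto p(B^*)$ is $\|\cdot\|_L$-contractive by Popescu's noncommutative von Neumann inequality \cite{MR1129595} --- so the identity extends to $f\in\mathcal A_d$:
\[
\mathbb E\|f(A)^*u\|^2=\bigl\|f(B^*)^*V_\emptyset u\bigr\|_{H'}^2 .
\]
Applying this to $f=\phi_r$ and to $f=\phi_r-\phi_{r'}$ (both in $\mathcal A_d$) and using linearity of $f\mapsto f(B^*)^*V_\emptyset u$, the Cauchy property reduces to the single assertion that
\[
w_r:=\phi_r(B^*)^*V_\emptyset u=\phi(rB^*)^*V_\emptyset u\in H'
\]
converges as $r\uparrow1$, for each fixed $u$.

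This last point is the main obstacle; it is precisely a noncommutative Fatou (radial--limit) statement, and here Popescu's Hardy--algebra machinery enters. Since $\|rB^*\|_{\mathrm{row}}\le r<1$, the tuple $rB^*$ is a pure (completely non-coisometric) row contraction on $H'$, so by the noncommutative Poisson transform $\phi(rB^*)=K_{rB^*}^*\bigl(\phi(L)\otimes I\bigr)K_{rB^*}$, where $K_{rB^*}\colon H'\to\ell^2(\mathbb F^+_d)\otimes H'$ is the Poisson kernel of $rB^*$. By Popescu's results on the radial behaviour of the Poisson kernel (see \cite{MR2566311}, and \cite{MR1625750} for the Davidson--Pitts viewpoint), $K_{rB^*}\to K_{B^*}$ in the $*$-strong operator topology as $r\uparrow1$; since these kernels are contractions and $\phi(L)$ is fixed, it follows that $\phi(rB^*)\to K_{B^*}^*\bigl(\phi(L)\otimes I\bigr)K_{B^*}$ in the $*$-strong topology, and in particular $w_r=\phi(rB^*)^*V_\emptyset u$ converges in $H'$. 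Consequently $\mathbb E\|(\phi_r(A)-\phi_{r'}(A))^*u\|^2=\|w_r-w_{r'}\|_{H'}^2\to0$ as $r,r'\uparrow1$, the net $\phi_r(A)$ is ms-SOT Cauchy, and the construction of $\Phi(\phi)$ described above goes through. The only technical nuisance beyond citing Popescu's Poisson-transform results is that the kernels $K_{rB^*}$ nominally act into $r$-dependent spaces; one handles this, as Popescu does, by realizing all of them inside the fixed space $\ell^2(\mathbb F^+_d)\otimes H'$ via the defect operators $\Delta_{rB^*}=(I-r^2\sum_iB_i^*B_i)^{1/2}$, which converge in norm to $\Delta_{B^*}$.
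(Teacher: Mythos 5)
Your reduction is sound, and in fact it mirrors the paper's: the identity $\mathbb{E}\|f(A)^{*}u\|^{2}=\|f(B^{*})^{*}V_{\emptyset}u\|_{H'}^{2}$ for $f\in\mathcal{A}_{d}$, obtained from the Kolmogorov data of \prettyref{thm:1} and extended from polynomials by Popescu's von Neumann inequality, is the $H'$-level counterpart of the identity $\mathbb{E}\|f(A)^{*}u\|^{2}=\|P^{1/2}f(S)^{*}Wu\|^{2}$ that the paper derives on the Cuntz dilation space. The genuine gap is the Fatou step. The assertion that $K_{rB^{*}}\to K_{B^{*}}$ in the $*$-strong topology as $r\uparrow1$ is false for a general row contraction: take $d=1$, $H'=\mathbb{C}$, $B^{*}=1$ (a unitary); then $\Delta_{B^{*}}=0$, so $K_{B^{*}}=0$, while each $K_{rB^{*}}$ is an isometry, so $K_{rB^{*}}h$ cannot converge strongly to $K_{B^{*}}h$. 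Purity of $rB^{*}$ for $r<1$ is automatic and beside the point; Popescu's radial-limit (Fatou) theorem for the $F^{\infty}_{d}$ calculus requires the tuple $B^{*}$ \emph{itself} to be completely non-coisometric, and this does not follow from the hypothesis $K_{\Sigma}\le K$. Concretely, for $d=1$, $H=\mathbb{C}$, $A\equiv1$ one has $K_{\Sigma}=K$ and $B^{*}=1$, and then $\phi_{r}(A)=\phi(r)$, which fails to converge as $r\uparrow1$ for an $H^{\infty}$ function (e.g.\ a Blaschke product) with no radial limit at $z=1$. So the convergence of your $w_{r}=\phi(rB^{*})^{*}V_{\emptyset}u$ is exactly where the argument breaks, and without an extra hypothesis (c.n.c.\ or purity of $B^{*}$, or an absolute-continuity assumption) it cannot be repaired.

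For comparison, the paper's own proof makes the parallel leap at the same spot: it asserts $\phi_{r}(S)\to\phi(S)$ in SOT for the Cuntz family $S$ of \prettyref{thm:1}, which is the same unestablished Fatou-type claim transported to the dilation space (a Cuntz family is coisometric, so $\phi(S)$ need not even be defined for $\phi\in F^{\infty}_{d}$). So you have correctly isolated the crux, but the Poisson-kernel argument does not close it, and the scalar example above shows no argument can close it under the stated hypotheses alone. Everything else in your outline --- well-definedness and linearity of $\Phi$, contractivity from the uniform bound $\|\phi_{r}(A)\|_{ms}\le\|\phi(L)\|$, compatibility via $\|\phi_{r}(L)-\phi(L)\|\to0$ for $\phi\in\mathcal{A}_{d}$, and uniqueness from property (1) --- is fine once the ms-SOT Cauchy property of $(\phi_{r}(A))_{r}$ is granted.
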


\begin{proof}
For any noncommutative polynomial $f$, \prettyref{cor:4} gives,
for all $u\in H$,
\begin{equation}
\mathbb{E}\left\Vert f\left(A\right)^{*}u\right\Vert ^{2}=\left\Vert P^{1/2}f\left(S\right)^{*}Wu\right\Vert ^{2}.\label{eq:c3}
\end{equation}
Using the $\mathcal{A}{}_{d}$ calculus from \prettyref{thm:7}, passing
to the limit in the polynomial identity \eqref{eq:c3}, we get 
\begin{equation}
\mathbb{E}\left\Vert g\left(A\right)^{*}u\right\Vert ^{2}=\left\Vert P^{1/2}g\left(S\right)^{*}Wu\right\Vert ^{2}\label{eq:c4}
\end{equation}
for all $g\in\mathcal{A}_{d}$. 

\textit{Define $\Phi$ by radial ms-SOT limits}. Fix $\phi\in F^{\infty}_{d}$.
For any $u\in H$, apply \eqref{eq:c4} to $g=\phi_{r}-\phi_{s}$:
\[
\mathbb{E}\left\Vert \left(\phi_{r}\left(A\right)-\phi_{s}\left(A\right)\right)^{*}u\right\Vert ^{2}=\left\Vert P^{1/2}\left(\phi_{r}\left(S\right)-\phi_{s}\left(S\right)\right)^{*}Wu\right\Vert ^{2}.
\]
Since $\left\{ \phi_{r}\left(S\right)\right\} {}_{0<r<1}$ is uniformly
bounded ($\left\Vert \phi_{r}\left(S\right)\right\Vert \le\left\Vert \phi\left(L\right)\right\Vert $)
and $\phi_{r}\left(S\right)\rightarrow\phi\left(S\right)$ in SOT
as $r\uparrow1$, the right-hand side $\rightarrow0$ for each fixed
$u$. Thus $\left\{ \phi_{r}\left(A\right)^{*}u\right\} _{r}$ is
Cauchy in $L^{2}(\Omega,H)$. Define 
\[
\left(\Phi\left(\phi\right)\right)^{*}u\coloneqq\lim_{r\uparrow1}\left(\phi_{r}\left(A\right)\right)^{*}u.
\]
 Linearity of $\Phi$ follows from $\left(a\phi+b\psi\right)_{r}=a\phi_{r}+b\psi_{r}$
and linearity of $L^{2}$ limits. 

\textit{Contractivity}. Using \eqref{eq:c4} with $g=\phi_{r}$ and
letting $r\uparrow1$, 
\begin{align*}
\mathbb{E}\left\Vert \left(\Phi\left(\phi\right)\right)^{*}u\right\Vert ^{2} & =\lim_{r\uparrow1}\mathbb{E}\left\Vert \phi_{r}\left(A\right)^{*}u\right\Vert ^{2}\\
 & =\lim_{r\uparrow1}\left\Vert P^{1/2}\phi_{r}\left(S\right)^{*}Wu\right\Vert ^{2}\\
 & =\big\| P^{1/2}\left(\phi\left(S\right)\right)^{*}Wu\big\|^{2}\\
 & \leq\left\Vert \phi\left(S\right)\right\Vert ^{2}\left\Vert Wu\right\Vert ^{2}\le\left\Vert \phi\left(L\right)\right\Vert ^{2}\left\Vert u\right\Vert ^{2}.
\end{align*}
Hence
\[
\mathbb{E}\left[\Phi\left(\phi\right)\left(\Phi\left(\phi\right)\right)^{*}\right]\le\left\Vert \phi\left(L\right)\right\Vert ^{2}I_{H}.
\]

\textit{Compatibility}. If $\phi\in\mathcal{A}_{d}$, then $\phi_{r}\left(S\right)\rightarrow\phi\left(S\right)$
in norm. By \eqref{eq:c4}, 
\[
\mathbb{E}\left\Vert \left(\phi_{r}\left(A\right)-\phi\left(A\right)\right)^{*}u\right\Vert ^{2}\rightarrow0.
\]
Thus, $\Phi\left(\phi\right)=\phi\left(A\right)$. 

\textit{Uniqueness}. If $\Psi$ is another linear map with the same
ms-SOT radial limit property, then for all $u\in H$, 
\[
\left(\Psi\left(\phi\right)\right)^{*}u=\lim_{r\uparrow1}\left(\phi_{r}\left(A\right)\right)^{*}u=\left(\Phi\left(\phi\right)\right)^{*}u.
\]
Hence $\Psi\left(\phi\right)=\Phi\left(\phi\right)$ in mean-square
sense. 
\end{proof}

\section{Multi-Level Moment Defects and Nested Dilations}\label{sec:4}

In this section, we extend the one-step defect analysis of \prettyref{thm:1}
to an arbitrary finite or infinite hierarchy of higher-order defects.
We show that if a moment kernel admits all iterated defect kernels,
then a single Cuntz dilation simultaneously realizes every defect
kernel through a nested chain of projections. This produces a canonical
decomposition of the moment structure.

Let $d\in\mathbb{N}$. For a positive definite kernel $K:\mathbb{F}^{+}_{d}\times\mathbb{F}^{+}_{d}\to\mathcal{L}\left(H\right)$,
define the free shift by 
\[
\left(\Sigma K\right)\left(\alpha,\beta\right):=\sum^{d}_{i=1}K\left(\alpha i,\beta i\right).
\]
Define the hierarchy of defect kernels recursively by 
\begin{align*}
K^{(0)} & :=K,\\
K^{(n+1)} & :=\left(I-\Sigma\right)K^{(n)},\qquad n\ge0.
\end{align*}
Explicitly, we have $K^{(1)}=K-\Sigma K$, $K^{(2)}=K-2\Sigma K+\Sigma^{2}K$,
and so forth. We are interested in the case where every defect kernel
is positive definite.

The following result shows that the positivity of the defect chain
$\{K^{(n)}\}$ is equivalent to the existence of a single Cuntz family
$S$ and a single isometric embedding $W:H\to\mathcal{K}$, together
with a nested sequence of projections 
\[
P_{0}\ge P_{1}\ge\cdots
\]
where each projection captures the defect structure at a fixed level.
\begin{thm}[Nested defects]
\label{thm:4-1} Let $M\in\mathbb{N}\cup\left\{ \infty\right\} $.
The following are equivalent:
\begin{enumerate}
\item[(A)] For every $n=1,\dots,M$, the iterated defect kernel 
\[
K^{(n)}=\left(I-\Sigma\right)^{n}K
\]
is positive definite on $\mathbb{F}^{+}_{d}\times\mathbb{F}^{+}_{d}$. 
\item[(B)] There exist a Hilbert space $\mathcal{K}$, a Cuntz family $S=\left(S_{1},\dots,S_{d}\right)$
on $\mathcal{K}$, an isometry $W:H\to\mathcal{K}$, and projections
\[
P_{0}\ge P_{1}\ge\cdots\ge P_{M}\ge0,
\]
such that for every $0\le n\le M$, 
\[
K^{(n)}\left(\alpha,\beta\right)=W^{*}S^{\alpha}P_{n}\left(S^{\beta}\right)^{*}W,\qquad\alpha,\beta\in\mathbb{F}^{+}_{d}.
\]
Each projection satisfies the defect recursion on the cyclic subspace
\begin{equation}
\mathcal{M}:=\overline{span}\left\{ \left(S^{\gamma}\right)^{*}Wu:\gamma\in\mathbb{F}^{+}_{d},u\in H\right\} ,\label{eq:4-1}
\end{equation}
namely, 
\[
\sum^{d}_{i=1}S_{i}P_{n}S^{*}_{i}\le P_{n}\quad\text{on }\mathcal{M},
\]
and 
\[
P_{n+1}=P_{n}-\sum^{d}_{i=1}S_{i}P_{n}S^{*}_{i}.
\]
\end{enumerate}
Moreover, the following properties hold: 
\begin{enumerate}
\item[(i)] For every finite $N\le M$, 
\[
K=\Sigma^{N}K+\sum^{N-1}_{n=0}\Sigma^{n}K^{(n+1)}.
\]
\item[(ii)]  If $\Sigma^{N}K\to0$ in the quadratic-form sense on finite index
sets, then 
\[
K=\sum^{\infty}_{n=0}\Sigma^{n}K^{(n+1)}.
\]
\item[(iii)] In terms of the defect projections, 
\[
P_{0}=\sum^{\infty}_{n=0}T^{n}\left(P_{n+1}\right)\quad\text{on }\mathcal{M},\qquad T\left(X\right):=\sum^{d}_{i=1}S_{i}XS^{*}_{i}.
\]
\item[(iv)]  If $\mathcal{K}$ is the closed span of 
\[
\left\{ S^{\alpha}P_{n}\left(S^{\beta}\right)^{*}Wu:\alpha,\beta\in\mathbb{F}^{+}_{d},0\le n\le M,u\in H\right\} ,
\]
then the dilation is minimal and unique up to a unitary intertwining
the Cuntz families. 
\end{enumerate}
\end{thm}

\begin{proof}
(A)$\Rightarrow$(B). We proceed by induction on $n$. The base case
($n=0,1$) is exactly \prettyref{thm:1}: We have 
\[
K\left(\alpha,\beta\right)=W^{*}S^{\alpha}P_{0}\left(S^{\beta}\right)^{*}W,
\]
and $\sum^{d}_{i=1}S_{i}P_{0}S^{*}_{i}\le P_{0}$ on the cyclic subspace
$\mathcal{M}$; see \eqref{eq:4-1}. Define 
\[
P_{1}:=P_{0}-\sum^{d}_{i=1}S_{i}P_{0}S^{*}_{i}.
\]
Then for every $\alpha,\beta$, 
\[
K^{(1)}\left(\alpha,\beta\right)=K\left(\alpha,\beta\right)-\Sigma K\left(\alpha,\beta\right)=W^{*}S^{\alpha}P_{1}\left(S^{\beta}\right)^{*}W.
\]
The positivity of $K^{(1)}$ implies $P_{1}\ge0$ on $\mathcal{M}$. 

For the inductive step, assume for some $1\le n<M$ that 
\[
K^{(n)}\left(\alpha,\beta\right)=W^{*}S^{\alpha}P_{n}\left(S^{\beta}\right)^{*}W
\]
with $P_{n}\ge0$ on $\mathcal{M}$ and $\sum^{d}_{i=1}S_{i}P_{n}S^{*}_{i}\le P_{n}$
on $\mathcal{M}$. We compute: 
\begin{align*}
\Sigma K^{(n)}\left(\alpha,\beta\right) & =\sum^{d}_{i=1}W^{*}S^{\alpha i}P_{n}\left(S^{\beta i}\right)^{*}W\\
 & =W^{*}S^{\alpha}\left(\sum^{d}_{i=1}S_{i}P_{n}S^{*}_{i}\right)\left(S^{\beta}\right)^{*}W.
\end{align*}
Thus, 
\[
K^{(n+1)}\left(\alpha,\beta\right)=K^{(n)}\left(\alpha,\beta\right)-\Sigma K^{(n)}\left(\alpha,\beta\right)=W^{*}S^{\alpha}P_{n+1}\left(S^{\beta}\right)^{*}W,
\]
where 
\[
P_{n+1}:=P_{n}-\sum^{d}_{i=1}S_{i}P_{n}S^{*}_{i}.
\]
The positivity of $K^{(n+1)}$ implies $P_{n+1}\ge0$ on $\mathcal{M}$.
This completes the induction.

(B)$\Rightarrow$(A). Assume the structure in (B). For a finite family
$\left\{ \left(\alpha_{j},u_{j}\right)\right\} $, 
\[
\sum_{j,k}\left\langle u_{j},K^{(n)}\left(\alpha_{j},\alpha_{k}\right)u_{k}\right\rangle =\left\langle \sum_{j}\left(S^{\alpha_{j}}\right)^{*}Wu_{j},P_{n}\sum_{k}\left(S^{\alpha_{k}}\right)^{*}Wu_{k}\right\rangle \ge0.
\]
Thus $K^{(n)}$ is positive definite. This proves the equivalence.

\textit{Expansions and Uniqueness.} Since $K^{(n)}=\Sigma K^{(n)}+K^{(n+1)}$,
iterating yields $K=\Sigma^{N}K+\sum^{N-1}_{n=0}\Sigma^{n}K^{(n+1)}$
for any $N\le M$. If $\Sigma^{N}K\to0$ in the quadratic form on
finite index sets, taking limits yields the infinite expansion. Under
the dilation representation, 
\[
\Sigma^{n}K^{(n+1)}\left(\alpha,\beta\right)=W^{*}S^{\alpha}T^{n}\left(P_{n+1}\right)\left(S^{\beta}\right)^{*}W,
\]
so $P_{0}=\sum^{\infty}_{n=0}T^{n}\left(P_{n+1}\right)$ on $\mathcal{M}$.
Minimality follows by construction from the closure of the span of
$\{S^{\alpha}P_{n}\left(S^{\beta}\right)^{*}Wu\}$. Uniqueness up
to a unitary follows from the uniqueness of the Kolmogorov factorization
and minimal Cuntz dilation. 
\end{proof}
The theorem produces a canonical hierarchy inside the Cuntz dilation,
with projections $P_{0}\ge P_{1}\ge\cdots$ encoding ``moment defects''
at all depths. When $\Sigma^{N}K\to0$, the moment structure decomposes
into shifted positive layers, analogous to discrete completely monotone
sequences. 
\begin{example}
We consider a concrete scalar example. Fix a probability measure $\mu$
on $\left[0,1\right]$. Define the moment kernel for $m,n\in\mathbb{N}$
by 
\[
K\left(m,n\right):=\int^{1}_{0}x^{m+n}d\mu\left(x\right).
\]

For $d=1$, the shift operation corresponds to $\left(\Sigma K\right)\left(m,n\right)=K\left(m+1,n+1\right)$.
The first defect kernel is 
\begin{align*}
K^{(1)}\left(m,n\right) & =K\left(m,n\right)-K\left(m+1,n+1\right)\\
 & =\int^{1}_{0}x^{m+n}\left(1-x^{2}\right)d\mu\left(x\right).
\end{align*}
Since $1-x^{2}\ge0$ on $\left[0,1\right]$, the kernel $K^{(1)}$
is positive definite. By induction, we verify the higher-order defects.
For a fixed $x$, let $k_{x}\left(m,n\right)=x^{m+n}$. Observe that
$\left(I-\Sigma\right)k_{x}=\left(1-x^{2}\right)k_{x}$. Iterating
this applies the weight function $\left(1-x^{2}\right)$ repeatedly.
Thus, 
\[
K^{(n)}\left(m,n\right)=\int^{1}_{0}x^{m+n}\left(1-x^{2}\right)^{n}d\mu\left(x\right).
\]
Since $\left(1-x^{2}\right)^{n}\ge0$ on the support of $\mu$, every
iterated defect kernel $K^{(n)}$ is positive definite. This confirms
that Condition (A) of \prettyref{thm:4-1} is satisfied for every
probability measure on $\left[0,1\right]$.

The asymptotic shift corresponds to 
\[
\left(\Sigma^{N}K\right)\left(m,n\right)=\int^{1}_{0}x^{m+n}\left(x^{2}\right)^{N}d\mu\left(x\right).
\]
As $N\to\infty$, the term $x^{2N}$ converges to the indicator function
of the set $\left\{ 1\right\} $. By the monotone convergence theorem,
\[
\lim_{N\to\infty}\left(\Sigma^{N}K\right)\left(m,n\right)=\mu\left(\left\{ 1\right\} \right).
\]
If $\mu$ has no atom at $1$, then $\Sigma^{N}K\to0$, and the kernel
admits the infinite telescoping decomposition 
\[
K=\sum^{\infty}_{n=0}\Sigma^{n}K^{(n+1)}.
\]
In this case, the moment kernel is entirely captured by the defects
of the random contraction.
\end{example}

\section{Wold-type Decomposition}\label{sec:5}

In the classical one-variable setting, the Wold decomposition for
an isometry separates the dynamics into a ``pure shift'' part and
a ``unitary'' part by looking at the asymptotics of $\left(V^{n}\right)$
or $\left(V^{*n}\right)$. Multivariable and noncommutative versions
of this picture for row isometries, free semigroup representations,
and completely positive maps all share the same basic feature: there
is a canonical invariant subspace on which nothing decays, and a complementary
pure part that is driven to zero under iteration. In the kernel setting,
there is no operator playing the role of a single isometry; instead,
the free shift $\Sigma$ acts at the level of the moment kernel $K$.
The contractivity condition $\left(K^{(1)}=K-\Sigma K\ge0\right)$
implies that the iterates 
\begin{equation}
K\ge\Sigma K\ge\Sigma^{2}K\ge\cdots\label{eq:5-1}
\end{equation}
form a decreasing chain of positive definite kernels. It is therefore
natural to ask whether one can recover a Wold-type splitting directly
from this chain: is there a maximal part of $K$ that is invariant
under $\Sigma$, and a complementary part that is asymptotically negligible
under iteration? The purpose of this section is to show that the answer
is yes. We prove that every contractive random moment kernel admits
a canonical decomposition 
\[
K=K_{\mathrm{pure}}+K_{\mathrm{unitary}},
\]
where $K_{\mathrm{unitary}}$ is shift-invariant $\left(\Sigma K_{\mathrm{unitary}}=K_{\mathrm{unitary}}\right)$
, $K_{\mathrm{pure}}$ is asymptotically pure $\left(\Sigma^{N}K_{\mathrm{pure}}\to0\text{ in the point-weak sense}\right)$,
and this splitting is unique and maximal in an appropriate order-theoretic
sense. We then show that this kernel-level Wold decomposition can
be realized inside the minimal Cuntz dilation of $K$: the unitary
part is obtained by compressing an asymptotic projection for the dilation,
and the pure part corresponds to its orthogonal complement. In this
way, the familiar dichotomy between pure and unitary dynamics is transported
to the level of random moment kernels.

Throughout this section we assume that $K:\mathbb{F}^{+}_{d}\times\mathbb{F}^{+}_{d}\to\mathcal{L}\left(H\right)$
is a moment kernel satisfying the basic contractivity condition 
\[
K^{\left(1\right)}:=K-K_{\Sigma}\ge0,
\]
and we continue to write $\left(\Sigma K\right)\left(\alpha,\beta\right)=\sum^{d}_{i=1}K\left(\alpha i,\beta i\right)$. 
\begin{lem}
$\Sigma$ is order-preservaing, i.e., if $K\leq L$, then $\Sigma K\leq\Sigma L$. 
\end{lem}

\begin{proof}
If $K$ is p.d., then for any finite family $\left\{ \left(\alpha_{j},u_{j}\right)\right\} $,
\[
\sum_{j,k}\left\langle u_{j},\left(\Sigma K\right)\left(\alpha_{j},\alpha_{k}\right)u_{k}\right\rangle =\sum^{d}_{i=1}\sum_{j,k}\left\langle u_{j},K\left(\alpha_{j}i,\alpha_{k}i\right)u_{k}\right\rangle \geq0
\]
so $\Sigma K$ is p.d. If $K\leq L$, then $L-K\geq0$, hence $\Sigma\left(L-K\right)\geq0$,
i.e., $\Sigma K\leq\Sigma L$. 
\end{proof}
Iteration and order-preservation of $\Sigma$ then give a decreasing
chain of positive definite kernels as in \eqref{eq:5-1}.

Next, we show that this chain admits a canonical asymptotic limit,
and that this limit determines a unique decomposition of $K$ into
a ``pure'' part and a ``unitary'' part. 
\begin{thm}
\label{thm:5-1}Let $K$ be a positive definite moment kernel with
$K^{(1)}\ge0$. Then there exist unique positive definite kernels
$K_{\text{pure}}$ and $K_{\text{unitary}}$ such that 
\[
K=K_{\text{pure}}+K_{\text{unitary}},
\]
and the following properties hold.
\begin{enumerate}
\item $K_{\text{unitary}}$ is invariant under the free shift: 
\[
\Sigma K_{\text{unitary}}=K_{\text{unitary}}.
\]
\item $K_{\text{pure}}$ is stable under iteration of the shift, in the
sense that 
\[
\Sigma^{N}K_{\text{pure}}\longrightarrow0
\]
in the point-weak topology.
\end{enumerate}
Moreover, 
\[
K_{\text{unitary}}\left(\alpha,\beta\right)=\lim_{N\to\infty}\left(\Sigma^{N}K\right)\left(\alpha,\beta\right),\qquad\alpha,\beta\in\mathbb{F}^{+}_{d}.
\]

\end{thm}

\begin{proof}
Since $\Sigma K\le K$, the iterates satisfy $K\ge\Sigma K\ge\Sigma^{2}K\ge\cdots$.
Fix a finite family $\left\{ \left(\alpha_{j},u_{j}\right)\right\} \subset\mathbb{F}^{+}_{d}\times H$
and set 
\[
Q_{N}:=\sum_{j,k}\left\langle u_{j},\left(\Sigma^{N}K\right)\left(\alpha_{j},\alpha_{k}\right)u_{k}\right\rangle .
\]
Because the kernels form a decreasing sequence in the positive definite
order, the real sequence $Q_{N}$ is decreasing and bounded below
by $0$. Hence it converges. Define $K_{\infty}$ by 
\[
\sum_{j,k}\left\langle u_{j},K_{\infty}\left(\alpha_{j},\alpha_{k}\right)u_{k}\right\rangle =\lim_{N\to\infty}Q_{N}.
\]
By standard polarization, $K_{\infty}$ is a well-defined positive
definite kernel.

To verify invariance, note that 
\begin{align*}
\left(\Sigma K_{\infty}\right)\left(\alpha,\beta\right) & =\sum^{d}_{i=1}K_{\infty}\left(\alpha i,\beta i\right)=\sum^{d}_{i=1}\lim_{N\to\infty}\left(\Sigma^{N}K\right)\left(\alpha i,\beta i\right)\\
 & =\lim_{N\to\infty}\left(\Sigma^{N+1}K\right)\left(\alpha,\beta\right)=K_{\infty}\left(\alpha,\beta\right),
\end{align*}
so $\Sigma K_{\infty}=K_{\infty}$. Set $K_{\text{unitary}}:=K_{\infty}$
and $K_{\text{pure}}:=K-K_{\infty}$. Since $K_{\infty}\le K$, the
kernel $K_{\text{pure}}$ is positive definite.

Finally, using the identity $\Sigma^{N}K_{\infty}=K_{\infty}$, 
\[
\left(\Sigma^{N}K_{\text{pure}}\right)\left(\alpha,\beta\right)=\left(\Sigma^{N}K\right)\left(\alpha,\beta\right)-K_{\infty}\left(\alpha,\beta\right),
\]
and the right-hand side converges to $0$ because $\Sigma^{N}K\to K_{\infty}$
pointwise in the quadratic forms. Uniqueness follows from the fact
that any other decomposition with the same properties agrees in the
limit under $\Sigma^{N}$ and therefore coincides with the one above. 
\end{proof}
\begin{cor}
With notation as in \prettyref{thm:5-1}, the kernel $K_{\text{unitary}}$
is the largest positive definite kernel $L$ dominated by $K$ and
fixed by the shift: 
\[
L\le K,\quad\Sigma L=L\Longrightarrow L\le K_{\text{unitary}}.
\]
Moreover, $K_{\text{pure}}$ contains no nontrivial invariant part:
if $L\le K_{\text{pure}}$ and $\Sigma L=L$, then $L=0$.
\end{cor}

\begin{proof}
Suppose $L\ge0$, $L\le K$, and $\Sigma L=L$. By order preservation
of $\Sigma$, 
\[
L=\Sigma^{N}L\le\Sigma^{N}K
\]
for every $N$. Passing to the limit of quadratic forms gives $L\le K_{\text{unitary}}$,
proving maximality.

For the second assertion, if $L\le K_{\text{pure}}$ and $\Sigma L=L$,
then $L\le K$ and the preceding argument gives $L\le K_{\text{unitary}}$.
Since $K_{\text{pure}}$ and $K_{\text{unitary}}$ have disjoint support
in the sense that 
\[
K_{\text{pure}}=K-K_{\text{unitary}},
\]
one must have $L=0$. 
\end{proof}
\begin{cor}
\label{cor:5-3}Let $\left(\mathcal{K},S,W,P\right)$ be the minimal
Cuntz dilation of $K$ constructed in \prettyref{thm:4-1}. Define
the completely positive map 
\[
T\left(X\right):=\sum^{d}_{i=1}S_{i}XS^{*}_{i}.
\]
Then for every $N\ge0$, 
\[
\left(\Sigma^{N}K\right)\left(\alpha,\beta\right)=W^{*}S^{\alpha}T^{N}\left(P\right)\left(S^{\beta}\right)^{*}W.
\]
The limit defining $K_{\text{unitary}}$ in \prettyref{thm:5-1} is
realized through the strong-operator limit of the sequence $T^{N}\left(P\right)$
on the cyclic subspace $\mathcal{M}$ in \eqref{eq:4-1}. Defining
\[
\left\langle y,P_{\infty}y\right\rangle :=\lim_{N\to\infty}\left\langle y,T^{N}\left(P\right)y\right\rangle ,\qquad y\in\mathcal{M},
\]
gives a well-defined positive contraction $P_{\infty}$ on $\mathcal{M}$
satisfying 
\begin{align*}
K_{\text{unitary}}\left(\alpha,\beta\right) & =W^{*}S^{\alpha}P_{\infty}\left(S^{\beta}\right)^{*}W,\\
K_{\text{pure}}\left(\alpha,\beta\right) & =W^{*}S^{\alpha}\left(P-P_{\infty}\right)\left(S^{\beta}\right)^{*}W.
\end{align*}
\end{cor}

\begin{proof}
The identity for $\Sigma^{N}K$ follows by induction from the formula
in \prettyref{thm:4-1}: 
\[
\left(\Sigma K\right)\left(\alpha,\beta\right)=W^{*}S^{\alpha}T\left(P\right)\left(S^{\beta}\right)^{*}W.
\]
Iterating yields 
\[
\left(\Sigma^{N}K\right)\left(\alpha,\beta\right)=W^{*}S^{\alpha}T^{N}\left(P\right)\left(S^{\beta}\right)^{*}W.
\]
For $y=\sum_{j}\left(S^{\alpha_{j}}\right)^{*}Wu_{j}\in\mathcal{M}$,
a standard computation using the Cuntz relations shows that 
\[
\left\langle y,T^{N}\left(P\right)y\right\rangle =\sum_{j,k}\left\langle u_{j},\left(\Sigma^{N}K\right)\left(\alpha_{j},\alpha_{k}\right)u_{k}\right\rangle .
\]
As $N\to\infty$, this converges to the quadratic form of $K_{\text{unitary}}$,
proving that 
\[
K_{\text{unitary}}\left(\alpha,\beta\right)=W^{*}S^{\alpha}P_{\infty}\left(S^{\beta}\right)^{*}W.
\]
The formula for $K_{\text{pure}}$ follows from $K_{\text{pure}}=K-K_{\text{unitary}}$. 
\end{proof}

\section{A curvature-type Invariant}\label{sec:6}

A curvature-type invariant sits naturally on top of the Wold picture
from \prettyref{sec:5}. Once we know that every contractive random
moment kernel $K$ admits a canonical splitting into a ``pure''
part that eventually dissipates under the free shift and a ``unitary''
part that survives unchanged, it is reasonable to ask for a single
numerical quantity that measures how large the non-dissipating component
really is. In the finite-dimensional, normalized setting, such a number
ought to live between $0$ and $\dim H$, vanish exactly when the
kernel is purely dissipative, and be recoverable from the asymptotic
behaviour of the iterates $\Sigma^{N}K$. This kind of scalar asymptotic
invariant has a clear analogue in multivariable operator theory. Arveson's
curvature for Hilbert modules and row contractions, and its subsequent
refinements in the noncommutative setting by Popescu and others, extract
from a dilation or a module a number that balances an initial ``dimension''
against the total accumulated defect; see, for instance, Arveson's
work on submodules of the $d$-shift and curvature invariants for
Hilbert modules \cite{MR1712636,MR1758582}, and Popescu's curvature
invariants for noncommutative multivariable operator theory \cite{MR1822685,MR3345180},
as well as related noncommutative variants in work of Kribs \cite{MR1857801}.
In those contexts, curvature detects the unitary part of a tuple and
plays a role in classification and rigidity results. 

The goal of this section is to introduce an analogous scalar in the
setting of random moment kernels. For a normalized kernel $K$ with
$K^{(1)}\ge0$, the Wold decomposition from \prettyref{sec:5} produces
a canonical invariant kernel $K_{\mathrm{unitary}}$ and a pure kernel
$K_{\mathrm{pure}}$. We will show that in the finite-dimensional
case the asymptotic ``mass'' of $K$ under the free shift, 
\[
\mathrm{tr}\left(\left(\Sigma^{N}K\right)(\emptyset,\emptyset)\right),
\]
converges and defines a curvature-type invariant $\kappa(K)$ which
coincides with 
\[
\mathrm{tr}\left(K_{\mathrm{unitary}}(\emptyset,\emptyset)\right),
\]
admits a defect-summation formula, and vanishes precisely when $K$
is purely dissipative. In this way, $\kappa(K)$ plays for random
moment kernels the same structural role that curvature plays for classical
row contractions and Hilbert modules.

Throughout this section $H$ is a finite-dimensional Hilbert space
and 
\[
K:\mathbb{F}^{+}_{d}\times\mathbb{F}^{+}_{d}\to\mathcal{L}\left(H\right)
\]
is a moment kernel arising from a random $d$-tuple $\left(A_{1},\dots,A_{d}\right)$,
normalized by 
\[
K\left(\emptyset,\emptyset\right)=I_{H}.
\]
We assume the first defect positivity 
\[
K^{(1)}:=K-\Sigma K\ge0,
\]
where the shift $\Sigma$ is given by 
\[
\left(\Sigma K\right)\left(\alpha,\beta\right)=\sum^{d}_{i=1}K\left(\alpha i,\beta i\right).
\]
By \prettyref{thm:5-1}, this assumption implies the existence of
a canonical Wold decomposition 
\[
K=K_{\text{pure}}+K_{\text{unitary}},
\]
where $K_{\text{unitary}}$ is shift-invariant ($\Sigma K_{\text{unitary}}=K_{\text{unitary}}$)
while $K_{\text{pure}}$ dissipates under iteration ($\Sigma^{N}K_{\text{pure}}\to0$
point-weakly). The operator-theoretic realization of this decomposition
inside the minimal Cuntz dilation was established in \prettyref{cor:5-3}.

In this section we introduce a numerical invariant associated with
the unitary part and obtain several equivalent formulas for it. We
also show that this invariant satisfies a noncommutative defect-summation
identity.

For each $N\ge0$, the kernel $\Sigma^{N}K$ satisfies 
\[
0\le\left(\Sigma^{N+1}K\right)\left(\emptyset,\emptyset\right)\le\left(\Sigma^{N}K\right)\left(\emptyset,\emptyset\right)\le I_{H}.
\]
Thus the sequence of positive operators 
\[
\left(\Sigma^{N}K\right)\left(\emptyset,\emptyset\right)
\]
is monotonically decreasing and bounded below by $0$, hence convergent
in the operator norm (and in every operator topology) in finite dimension.
\begin{defn}
\label{def:6-1}The curvature of the moment kernel $K$ is the scalar
\[
\kappa\left(K\right):=\lim_{N\to\infty}\text{tr}\left(\left(\Sigma^{N}K\right)\left(\emptyset,\emptyset\right)\right).
\]
Since each term is bounded by $\text{tr}\left(I_{H}\right)=\dim\left(H\right)$,
the limit exists and satisfies 
\[
0\le\kappa\left(K\right)\le\dim\left(H\right).
\]
This quantity measures the non-dissipating part of the random tuple
under repeated application of the free shift. The connection to the
Wold decomposition will be made precise in the following theorem. 
\end{defn}

We now give five equivalent descriptions of $\kappa\left(K\right)$
involving (i) the unitary kernel, (ii) a variational supremum, and
(iii) the asymptotic projection inside the dilation.
\begin{thm}
\label{thm:6-2}Let $K=K_{\text{pure}}+K_{\text{unitary}}$ be the
Wold decomposition from \prettyref{thm:5-1}, and let $\left(\mathcal{K},S,W,P\right)$
be the minimal Cuntz dilation of $K$ (\prettyref{thm:4-1}), with
asymptotic projection $P_{\infty}$ constructed in \prettyref{cor:5-3}.
Then the following quantities are equal: 
\[
\begin{aligned}\kappa\left(K\right) & =\lim_{N\to\infty}\text{tr}\left(\left(\Sigma^{N}K\right)\left(\emptyset,\emptyset\right)\right)\\
 & =\text{tr}\left(K_{\text{unitary}}\left(\emptyset,\emptyset\right)\right)\\
 & =\sup\left\{ \text{tr}\left(L\left(\emptyset,\emptyset\right)\right):0\le L\le K,\ \Sigma L=L\right\} \\
 & =\text{tr}\left(W^{*}P_{\infty}W\right)\\
 & =\text{tr}\left(P_{\infty}WW^{*}\right).
\end{aligned}
\]
Furthermore: 
\[
\kappa\left(K\right)=0\Longleftrightarrow K_{\text{unitary}}=0\Longleftrightarrow\Sigma^{N}K\to0,
\]
and 
\[
\kappa\left(K\right)=\dim\left(H\right)\quad\Longleftrightarrow\quad K_{\text{pure}}=0.
\]
\end{thm}

\begin{proof}
\textit{Step 1. Equality with the unitary part.} By \prettyref{thm:5-1},
\[
\left(\Sigma^{N}K\right)\left(\alpha,\beta\right)\xrightarrow[N\to\infty]{}K_{\text{unitary}}\left(\alpha,\beta\right)
\]
in the point-weak topology. Since in finite dimension convergence
of matrices in the weak topology implies convergence in norm, we have
\[
\lim_{N\to\infty}\left(\Sigma^{N}K\right)\left(\emptyset,\emptyset\right)=K_{\text{unitary}}\left(\emptyset,\emptyset\right)
\]
as operators. Taking traces gives $\kappa\left(K\right)=\text{tr}\left(K_{\text{unitary}}\left(\emptyset,\emptyset\right)\right)$.

\textit{Step 2. Variational characterization.} Let $L$ be a positive
definite kernel with $0\le L\le K$ and $\Sigma L=L$. Applying $\Sigma^{N}$
to the inequality gives 
\[
L=\Sigma^{N}L\le\Sigma^{N}K.
\]
Taking the $\left(\emptyset,\emptyset\right)$-entry and the trace
yields 
\[
\text{tr}\left(L\left(\emptyset,\emptyset\right)\right)\le\text{tr}\left(\left(\Sigma^{N}K\right)\left(\emptyset,\emptyset\right)\right).
\]
Letting $N\to\infty$ and using Step 1 gives $\text{tr}\left(L\left(\emptyset,\emptyset\right)\right)\le\text{tr}\left(K_{\text{unitary}}\left(\emptyset,\emptyset\right)\right)$.
Taking the supremum over all such $L$ gives the inequality in one
direction. The reverse inequality holds by choosing $L=K_{\text{unitary}}$,
proving the variational formula.

\textit{Step 3. Dilation formulas.} \prettyref{cor:5-3} shows that
\[
K_{\text{unitary}}\left(\alpha,\beta\right)=W^{*}S^{\alpha}P_{\infty}\left(S^{\beta}\right)^{*}W.
\]
Thus 
\[
K_{\text{unitary}}\left(\emptyset,\emptyset\right)=W^{*}P_{\infty}W.
\]
Taking traces gives the fourth formula. To obtain the fifth, note
that $WW^{*}$ is the projection onto $\text{ran}\left(W\right)\subseteq\mathcal{K}$,
which has finite rank $\dim\left(H\right)$. Thus the operator $P_{\infty}WW^{*}$
is finite-rank and trace-class, and the identity $\text{tr}\left(W^{*}P_{\infty}W\right)=\text{tr}\left(P_{\infty}WW^{*}\right)$.

\textit{Step 4. Dichotomy properties.} If $\kappa\left(K\right)=0$,
then $K_{\text{unitary}}\left(\emptyset,\emptyset\right)=0$. Since
$K_{\text{unitary}}\ge0$ and shift-invariant, the identity $K_{\text{unitary}}\left(\alpha,\alpha\right)=\sum^{d}_{i=1}K_{\text{unitary}}\left(\alpha i,\alpha i\right)$
and positivity imply $K_{\text{unitary}}\left(\alpha,\alpha\right)=0$
for all $\alpha$, and the polarization identity then forces $K_{\text{unitary}}=0$.
Conversely, if $K_{\text{unitary}}=0$, then $\kappa\left(K\right)=0$
by Step 1. This implies $\Sigma^{N}K=\Sigma^{N}K_{\text{pure}}\to0$.

If $\kappa\left(K\right)=\dim\left(H\right)=\text{tr}\left(I_{H}\right)$,
then $\text{tr}\left(K_{\text{unitary}}\left(\emptyset,\emptyset\right)\right)=\text{tr}\left(I_{H}\right)$,
so positivity implies $K_{\text{unitary}}\left(\emptyset,\emptyset\right)=I_{H}$.
Since $K\left(\emptyset,\emptyset\right)=I_{H}$, this forces $K_{\text{pure}}\left(\emptyset,\emptyset\right)=0$.
Monotonicity implies each term vanishes identically, and again polarization
gives $K_{\text{pure}}=0$. The converse is immediate. 
\end{proof}

\subsection*{Defect summation identity}

We now express $\kappa\left(K\right)$ in terms of the cumulative
sum of the first defect kernel. This formula is a direct analogue
of classical curvature identities in single-variable dilation theory.
\begin{thm}
\label{thm:6-3}For every integer $N\ge1$, 
\[
K-\Sigma^{N}K=\sum^{N-1}_{n=0}\Sigma^{n}K^{(1)}.
\]
Taking traces at $\left(\emptyset,\emptyset\right)$ and letting $N\to\infty$
yields the identity 
\[
\kappa\left(K\right)=\text{tr}\left(I_{H}\right)-\sum^{\infty}_{n=0}\text{tr}\left(\left(\Sigma^{n}K^{(1)}\right)\left(\emptyset,\emptyset\right)\right).
\]
The series on the right converges in $\left[0,\text{tr}\left(I_{H}\right)\right]$. 
\end{thm}

\begin{proof}
By definition, $K^{(1)}=K-\Sigma K$. Applying $\Sigma^{n}$ gives
\[
\Sigma^{n}K^{(1)}=\Sigma^{n}K-\Sigma^{n+1}K\qquad\left(n\ge0\right).
\]
Summing these identities for $n=0,\dots,N-1$ gives telescoping: 
\begin{align*}
\sum^{N-1}_{n=0}\Sigma^{n}K^{(1)} & =\left(K-\Sigma K\right)+\left(\Sigma K-\Sigma^{2}K\right)+\cdots+\left(\Sigma^{N-1}K-\Sigma^{N}K\right)\\
 & =K-\Sigma^{N}K.
\end{align*}
Now take $\left(\emptyset,\emptyset\right)$-entries and apply the
trace. Since all operators involved are positive semidefinite and
dominated by $I_{H}$, each term has finite trace. Thus 
\[
\text{tr}\left(K\left(\emptyset,\emptyset\right)\right)-\text{tr}\left(\left(\Sigma^{N}K\right)\left(\emptyset,\emptyset\right)\right)=\sum^{N-1}_{n=0}\text{tr}\left(\left(\Sigma^{n}K^{(1)}\right)\left(\emptyset,\emptyset\right)\right).
\]
As $N\to\infty$, the left-hand side converges to $\text{tr}\left(I_{H}\right)-\kappa\left(K\right)$,
by \prettyref{def:6-1} and \prettyref{thm:6-2}. All terms on the
right are nonnegative, so the series converges by monotone convergence.
Rewriting yields the stated identity. 
\end{proof}

\subsection*{Consequences and examples}

We record two immediate implications of the defect summation identity.
\begin{cor}[Quantitative estimate under defect decay ]
Suppose there exist constants $C>0$ and $0<\rho<1$ such that 
\[
\text{tr}\left(\left(\Sigma^{n}K^{(1)}\right)\left(\emptyset,\emptyset\right)\right)\le C\rho^{n}\qquad\left(n\ge0\right).
\]
Then 
\[
\kappa\left(K\right)\ge\dim\left(H\right)-\frac{C}{1-\rho}.
\]
Moreover, equality holds whenever the defect trace is exactly $C\rho^{n}$. 
\end{cor}

\begin{cor}[Scalar moment model]
 In the scalar model of Section 4, where 
\[
K\left(m,n\right)=\int^{1}_{0}x^{m+n}d\mu\left(x\right)\cdot I_{H},
\]
one has 
\[
\kappa\left(K\right)=\dim\left(H\right)\cdot\mu\left(\{1\}\right).
\]
Furthermore, 
\[
\text{tr}\left(\left(\Sigma^{n}K^{(1)}\right)\left(\emptyset,\emptyset\right)\right)=\dim\left(H\right)\int^{1}_{0}\left(1-x^{2}\right)x^{2n}d\mu\left(x\right),
\]
and summing these terms reproduces $\dim\left(H\right)\mu\left(\{1\}\right)$. 
\end{cor}

\begin{proof}
We computed in the Example from Section 4 that 
\[
\left(\Sigma^{N}K\right)\left(\emptyset,\emptyset\right)=\int^{1}_{0}x^{2N}d\mu\left(x\right)\cdot I_{H}.
\]
Since $x^{2N}\to\mathbf{1}_{\{1\}}$ pointwise on $\left[0,1\right]$,
the limit is $\mu\left(\{1\}\right)I_{H}$. Thus 
\[
\kappa\left(K\right)=\dim\left(H\right)\mu\left(\{1\}\right).
\]
The defect formula follows from the identity 
\[
K^{(1)}\left(m,n\right)=\int^{1}_{0}x^{m+n}\left(1-x^{2}\right)d\mu\left(x\right)\cdot I_{H}.
\]
Applying $\Sigma^{n}$ multiplies by $x^{2n}$. Summing the resulting
geometric series on $\left[0,1\right)$ gives the stated relation. 
\end{proof}

\subsection*{Monotonicity and functoriality}

Curvature behaves naturally under compression, dilation, and unitary
similarity.
\begin{prop}
Let $K$ and $L$ be moment kernels with first defects satisfying
$K^{(1)},L^{(1)}\ge0$.
\begin{enumerate}
\item If $0\le L\le K$, then $\kappa\left(L\right)\le\kappa\left(K\right)$.
\item If $U:H\to H$ is unitary and $K^{U}\left(\alpha,\beta\right)=U^{*}K\left(\alpha,\beta\right)U$,
then $\kappa\left(K^{U}\right)=\kappa\left(K\right)$.
\item If $V:H\to H'$ is an isometry and $K'\left(\alpha,\beta\right)=VK\left(\alpha,\beta\right)V^{*}$,
then $\kappa\left(K'\right)=\kappa\left(K\right)$.
\end{enumerate}
\end{prop}

\begin{proof}
(1) Since $\Sigma$ is order-preserving, $\Sigma^{N}L\le\Sigma^{N}K$.
Taking $\left(\emptyset,\emptyset\right)$-entries, applying trace,
and passing to the limit gives $\kappa\left(L\right)\le\kappa\left(K\right)$.

(2) Since $\text{tr}\left(U^{*}XU\right)=\text{tr}\left(X\right)$,
\[
\text{tr}\left(\left(\Sigma^{N}K^{U}\right)\left(\emptyset,\emptyset\right)\right)=\text{tr}\left(U^{*}\left(\Sigma^{N}K\right)\left(\emptyset,\emptyset\right)U\right)=\text{tr}\left(\left(\Sigma^{N}K\right)\left(\emptyset,\emptyset\right)\right),
\]
so $\kappa\left(K^{U}\right)=\kappa\left(K\right)$.

(3) For an isometry $V$, 
\[
\left(\Sigma^{N}K'\right)\left(\emptyset,\emptyset\right)=V\left(\Sigma^{N}K\right)\left(\emptyset,\emptyset\right)V^{*}.
\]
Since $V^{*}V=I_{H}$, 
\[
\text{tr}\left(\left(\Sigma^{N}K'\right)\left(\emptyset,\emptyset\right)\right)=\text{tr}\left(\left(\Sigma^{N}K\right)\left(\emptyset,\emptyset\right)\right).
\]
Passing to the limit gives $\kappa\left(K'\right)=\kappa\left(K\right)$. 
\end{proof}

\subsection*{Interpretation}

The curvature $\kappa\left(K\right)$ is a numerical invariant measuring
the ``persistent energy'' of a random operator tuple. It satisfies: 
\begin{itemize}
\item $\kappa\left(K\right)=0$ if and only if the tuple is fully dissipative
(pure), 
\item $\kappa\left(K\right)=\dim\left(H\right)$ if and only if it is unitary
in mean-square, 
\item intermediate values measure the ``dimension'' of the shift-invariant
component. 
\end{itemize}
The defect-summation formula of \prettyref{thm:6-3} shows that curvature
complements the cumulative mean-square defect, giving a precise balance
law: 
\[
\text{initial mass}=\text{curvature}+\text{total defect}.
\]

\bibliographystyle{amsalpha}
\bibliography{ref}

\providecommand{\bysame}{\leavevmode\hbox to3em{\hrulefill}\thinspace}
\providecommand{\MR}{\relax\ifhmode\unskip\space\fi MR }
% \MRhref is called by the amsart/book/proc definition of \MR.
\providecommand{\MRhref}[2]{%
  \href{http://www.ams.org/mathscinet-getitem?mr=#1}{#2}
}
\providecommand{\href}[2]{#2}
\begin{thebibliography}{SNFBK10}

\bibitem[AD25]{MR4866510}
Alexandru Aleman and Frej Dahlin, \emph{Generalized de {B}ranges--{R}ovnyak
  spaces}, J. Funct. Anal. \textbf{288} (2025), no.~11, Paper No. 110860, 28.
  \MR{4866510}

\bibitem[AH23]{MR4658229}
Vasudevarao Allu and Himadri Halder, \emph{Bohr operator on operator-valued
  polyanalytic functions on simply connected domains}, Canad. Math. Bull.
  \textbf{66} (2023), no.~4, 1411--1422. \MR{4658229}

\bibitem[ALY23]{MR4530898}
Jim Agler, Zinaida~A. Lykova, and N.~J. Young, \emph{A {H}ilbert space approach
  to singularities of functions}, J. Funct. Anal. \textbf{284} (2023), no.~6,
  Paper No. 109826, 63. \MR{4530898}

\bibitem[Aro50]{MR51437}
N.~Aronszajn, \emph{Theory of reproducing kernels}, Trans. Amer. Math. Soc.
  \textbf{68} (1950), 337--404. \MR{51437}

\bibitem[Arv99]{MR1712636}
William Arveson, \emph{The curvature of a {H}ilbert module over {${\bf
  C}[z_1,\cdots,z_d]$}}, Proc. Natl. Acad. Sci. USA \textbf{96} (1999), no.~20,
  11096--11099. \MR{1712636}

\bibitem[Arv00]{MR1758582}
\bysame, \emph{The curvature invariant of a {H}ilbert module over {${\bf
  C}[z_1,\cdots,z_d]$}}, J. Reine Angew. Math. \textbf{522} (2000), 173--236.
  \MR{1758582}

\bibitem[Arv10]{MR2743416}
\bysame, \emph{Dilation theory yesterday and today}, A glimpse at {H}ilbert
  space operators, Oper. Theory Adv. Appl., vol. 207, Birkh\"{a}user Verlag,
  Basel, 2010, pp.~99--123. \MR{2743416}

\bibitem[AS23]{MR4595378}
Anddy~E. Alvarado-Solano, \emph{Radonification of a cylindrical {L}\'{e}vy
  process}, Random Oper. Stoch. Equ. \textbf{31} (2023), no.~2, 199--204.
  \MR{4595378}

\bibitem[AS25]{MR4860987}
Sa'ud Al-Sa'di, \emph{Parseval frames of some {H}ilbert spaces of entire vector
  valued functions via {N}aimark dilation}, Bol. Soc. Parana. Mat. (3)
  \textbf{43} (2025), 10. \MR{4860987}

\bibitem[BB17]{MR3626500}
Joseph~A. Ball and Vladimir Bolotnikov, \emph{Contractive multipliers from
  {H}ardy space to weighted {H}ardy space}, Proc. Amer. Math. Soc. \textbf{145}
  (2017), no.~6, 2411--2425. \MR{3626500}

\bibitem[BB23]{MR4507623}
Yurii Belov and Alexander Borichev, \emph{On the chain structure in the de
  {B}ranges spaces}, J. Funct. Anal. \textbf{284} (2023), no.~2, Paper No.
  109757, 23. \MR{4507623}

\bibitem[BGKT24]{MR4757479}
Charles Batty, Alexander Gomilko, Dominik Kobos, and Yuri Tomilov,
  \emph{Analytic {B}esov functional calculus for several commuting operators},
  J. Spectr. Theory \textbf{14} (2024), no.~2, 513--556. \MR{4757479}

\bibitem[BGP25]{MR4874969}
B.~V.~Rajarama Bhat, Anindya Ghatak, and Santhosh~Kumar Pamula, \emph{Block
  decompositions of operator moment dilations}, Banach J. Math. Anal.
  \textbf{19} (2025), no.~2, Paper No. 26, 42. \MR{4874969}

\bibitem[BJ23]{MR4592266}
Tirthankar Bhattacharyya and Abhay Jindal, \emph{Complete {N}evanlinna-{P}ick
  kernels and the characteristic function}, Adv. Math. \textbf{426} (2023),
  Paper No. 109089, 25. \MR{4592266}

\bibitem[BL22]{MR4460482}
T.~B\^{i}nzar and C.~L\u{a}zureanu, \emph{Wold-type decompositions for pairs of
  commutative semigroups generated by isometries}, Funct. Anal. Appl.
  \textbf{56} (2022), no.~1, 37--47, Translation of Funktsional. Anal. i
  Prilozhen. {\bf 56} (2022), no. 1, 51--65. \MR{4460482}

\bibitem[BP24]{MR4765823}
Bappa Bisai and Sourav Pal, \emph{A {N}agy-{F}oias program for a c.n.u.
  {$\Gamma_n$}-contraction}, Complex Anal. Oper. Theory \textbf{18} (2024),
  no.~5, Paper No. 122, 29. \MR{4765823}

\bibitem[BPS23]{MR4510089}
Bappa Bisai, Sourav Pal, and Prajakta Sahasrabuddhe, \emph{On {$q$}-commuting
  co-extensions and {$q$}-commutant lifting}, Linear Algebra Appl. \textbf{658}
  (2023), 186--205. \MR{4510089}

\bibitem[BS20]{MR4200246}
Joseph~A. Ball and Haripada Sau, \emph{Functional models for commuting
  {H}ilbert-space contractions}, Operator theory, operator algebras and their
  interactions with geometry and topology---{R}onald {G}. {D}ouglas memorial
  volume, Oper. Theory Adv. Appl., vol. 278, Birkh\"{a}user/Springer, Cham,
  [2020] \copyright 2020, pp.~11--54. \MR{4200246}

\bibitem[BS23]{MR4534539}
\bysame, \emph{Dilation theory and functional models for tetrablock
  contractions}, Complex Anal. Oper. Theory \textbf{17} (2023), no.~2, Paper
  No. 25, 40. \MR{4534539}

\bibitem[Bun84]{MR744917}
John~W. Bunce, \emph{Models for {$n$}-tuples of noncommuting operators}, J.
  Funct. Anal. \textbf{57} (1984), no.~1, 21--30. \MR{744917}

\bibitem[CD23]{MR4644887}
Branko \'{C}urgus and Aad Dijksma, \emph{Operators without eigenvalues in
  finite-dimensional vector spaces: essential uniqueness of the model}, Linear
  Algebra Appl. \textbf{679} (2023), 86--98. \MR{4644887}

\bibitem[CDL24]{MR4796153}
Nikolaos Chalmoukis, Alberto Dayan, and Giuseppe Lamberti, \emph{Random
  {C}arleson sequences for the {H}ardy space on the polydisc and the unit
  ball}, J. Funct. Anal. \textbf{287} (2024), no.~12, Paper No. 110659, 23.
  \MR{4796153}

\bibitem[CS24]{MR4617048}
Aurelian Cr\u{a}ciunescu and Laurian Suciu, \emph{Brownian extensions in the
  context of three-isometries}, J. Math. Anal. Appl. \textbf{529} (2024),
  no.~1, Paper No. 127591, 19. \MR{4617048}

\bibitem[Dah24]{MR4622471}
Raj Dahya, \emph{Characterisations of dilations via approximants, expectations,
  and functional calculi}, J. Math. Anal. Appl. \textbf{529} (2024), no.~1,
  Paper No. 127607, 37. \MR{4622471}

\bibitem[DFS12]{MR2891735}
Ronald~G. Douglas, Ciprian Foias, and Jaydeb Sarkar, \emph{Resolutions of
  {H}ilbert modules and similarity}, J. Geom. Anal. \textbf{22} (2012), no.~2,
  471--490. \MR{2891735}

\bibitem[DLM24]{MR4690505}
Charles Duquet and Christian Le~Merdy, \emph{A characterization of absolutely
  dilatable {S}chur multipliers}, Adv. Math. \textbf{439} (2024), Paper No.
  109492, 33. \MR{4690505}

\bibitem[DP98]{MR1625750}
Kenneth~R. Davidson and David~R. Pitts, \emph{The algebraic structure of
  non-commutative analytic {T}oeplitz algebras}, Math. Ann. \textbf{311}
  (1998), no.~2, 275--303. \MR{1625750}

\bibitem[DP25]{MR4896090}
Paul Dommel and Alois Pichler, \emph{On the approximation of kernel functions},
  J. Mach. Learn. Res. \textbf{26} (2025), Paper No. [41], 30. \MR{4896090}

\bibitem[DS17]{MR3711878}
Harry Dym and Santanu Sarkar, \emph{Multiplication operators with deficiency
  indices {$(p, p)$} and sampling formulas in reproducing kernel {H}ilbert
  spaces of entire vector valued functions}, J. Funct. Anal. \textbf{273}
  (2017), no.~12, 3671--3718. \MR{3711878}

\bibitem[Dym23]{MR4513110}
Harry Dym, \emph{Two classes of vector valued de {B}ranges spaces}, J. Funct.
  Anal. \textbf{284} (2023), no.~3, Paper No. 109758, 31. \MR{4513110}

\bibitem[EFEK19]{MR3906294}
O.~El-Fallah, Y.~Elmadani, and K.~Kellay, \emph{Kernel and capacity estimates
  in {D}irichlet spaces}, J. Funct. Anal. \textbf{276} (2019), no.~3, 867--895.
  \MR{3906294}

\bibitem[Eng16]{MR3522008}
Miroslav Engli\v{s}, \emph{High-power asymptotics of some weighted harmonic
  {B}ergman kernels}, J. Funct. Anal. \textbf{271} (2016), no.~5, 1243--1261.
  \MR{3522008}

\bibitem[Fra82]{MR671311}
Arthur~E. Frazho, \emph{Models for noncommuting operators}, J. Functional
  Analysis \textbf{48} (1982), no.~1, 1--11. \MR{671311}

\bibitem[Ful23]{MR4651635}
Adam~H. Fuller, \emph{S\l oci\'{n}ski-{W}old decompositions for row
  isometries}, Canad. Math. Bull. \textbf{66} (2023), no.~3, 780--790.
  \MR{4651635}

\bibitem[Gam22]{MR4396935}
Maria~F. Gamal', \emph{On polynomially bounded operators with shift-type
  invariant subspaces}, J. Operator Theory \textbf{87} (2022), no.~2, 251--270.
  \MR{4396935}

\bibitem[Gam23]{MR4658745}
\bysame, \emph{On isometric asymptotes of operators quasisimilar to
  isometries}, J. Operator Theory \textbf{90} (2023), no.~2, 491--522.
  \MR{4658745}

\bibitem[GL23]{MR4567339}
Caixing Gu and Shuaibing Luo, \emph{Higher order isometric shift operator on
  the de {B}ranges--{R}ovnyak space}, J. Operator Theory \textbf{89} (2023),
  no.~1, 105--123. \MR{4567339}

\bibitem[JLM07]{MR2338860}
Marius Junge and Christian Le~Merdy, \emph{Dilations and rigid factorisations
  on noncommutative {$L^p$}-spaces}, J. Funct. Anal. \textbf{249} (2007),
  no.~1, 220--252. \MR{2338860}

\bibitem[JP10]{MR2589944}
Marius Junge and Javier Parcet, \emph{Mixed-norm inequalities and operator
  space {$L_p$} embedding theory}, Mem. Amer. Math. Soc. \textbf{203} (2010),
  no.~953, vi+155. \MR{2589944}

\bibitem[Jun02]{MR1916654}
Marius Junge, \emph{Doob's inequality for non-commutative martingales}, J.
  Reine Angew. Math. \textbf{549} (2002), 149--190. \MR{1916654}

\bibitem[Kri01]{MR1857801}
David~W. Kribs, \emph{The curvature invariant of a non-commuting {$n$}-tuple},
  Integral Equations Operator Theory \textbf{41} (2001), no.~4, 426--454.
  \MR{1857801}

\bibitem[Kuz24]{MR4751521}
Sergiusz Kuzel, \emph{Remarks on {N}aimark dilation theorem}, Canad. Math.
  Bull. \textbf{67} (2024), no.~2, 469--477. \MR{4751521}

\bibitem[Li22]{MR4430950}
Boyu Li, \emph{Wold decomposition on odometer semigroups}, Proc. Roy. Soc.
  Edinburgh Sect. A \textbf{152} (2022), no.~3, 738--755. \MR{4430950}

\bibitem[LM23]{MR4521737}
Adem Limani and Bartosz Malman, \emph{On the problem of smooth approximations
  in {$\mathcal{H}(b)$} and connections to subnormal operators}, J. Funct.
  Anal. \textbf{284} (2023), no.~5, Paper No. 109803, 15. \MR{4521737}

\bibitem[LYZ23]{MR4604840}
Yufeng Lu, Yixin Yang, and Chao Zu, \emph{A geometric approach to the
  compressed shift operator on the {H}ardy space over the bidisk}, J. Funct.
  Anal. \textbf{285} (2023), no.~7, Paper No. 110063, 44. \MR{4604840}

\bibitem[MM23]{MR4555751}
Satyabrata Majee and Amit Maji, \emph{Wold-type decomposition for
  {$\mathcal(U)_n$}-twisted contractions}, J. Math. Anal. Appl. \textbf{525}
  (2023), no.~2, Paper No. 127148, 18. \MR{4555751}

\bibitem[NS06]{MR2266879}
Alexandru Nica and Roland Speicher, \emph{Lectures on the combinatorics of free
  probability}, London Mathematical Society Lecture Note Series, vol. 335,
  Cambridge University Press, Cambridge, 2006. \MR{2266879}

\bibitem[Pal22]{MR4369650}
Avijit Pal, \emph{On {$\Gamma_n$}-contractions and their conditional
  dilations}, J. Math. Anal. Appl. \textbf{510} (2022), no.~2, Paper No.
  126016, 36. \MR{4369650}

\bibitem[Ped58]{MR2938971}
George Pedrick, \emph{Theory of reproducing kernels for {H}ilbert spaces of
  vector valued functions}, ProQuest LLC, Ann Arbor, MI, 1958, Thesis
  (Ph.D.)--University of Kansas. \MR{2938971}

\bibitem[Pop89]{MR972704}
Gelu Popescu, \emph{Isometric dilations for infinite sequences of noncommuting
  operators}, Trans. Amer. Math. Soc. \textbf{316} (1989), no.~2, 523--536.
  \MR{972704}

\bibitem[Pop91]{MR1129595}
\bysame, \emph{von {N}eumann inequality for {$(B({\mathscr H})^n)_1$}}, Math.
  Scand. \textbf{68} (1991), no.~2, 292--304. \MR{1129595}

\bibitem[Pop01]{MR1822685}
\bysame, \emph{Curvature invariant for {H}ilbert modules over free semigroup
  algebras}, Adv. Math. \textbf{158} (2001), no.~2, 264--309. \MR{1822685}

\bibitem[Pop10]{MR2566311}
\bysame, \emph{Free holomorphic functions on the unit ball of {$B(\mathscr
  H)^n$}. {II}}, J. Funct. Anal. \textbf{258} (2010), no.~5, 1513--1578.
  \MR{2566311}

\bibitem[Pop15]{MR3345180}
\bysame, \emph{Curvature invariant on noncommutative polyballs}, Adv. Math.
  \textbf{279} (2015), 104--158. \MR{3345180}

\bibitem[Ran07]{MR2319715}
Narcisse Randrianantoanina, \emph{Conditioned square functions for
  noncommutative martingales}, Ann. Probab. \textbf{35} (2007), no.~3,
  1039--1070. \MR{2319715}

\bibitem[Ran24]{MR4720521}
Thomas Ransford, \emph{Negative powers of {H}ilbert-space contractions}, J.
  Funct. Anal. \textbf{286} (2024), no.~10, Paper No. 110397, 21. \MR{4720521}

\bibitem[RBC24]{MR4802624}
B.~V. Rajarama~Bhat and Arghya Chongdar, \emph{A minimal completion theorem and
  almost everywhere equivalence for completely positive maps}, Proc. Amer.
  Math. Soc. \textbf{152} (2024), no.~11, 4703--4715. \MR{4802624}

\bibitem[Sar14]{MR3151275}
Jaydeb Sarkar, \emph{Wold decomposition for doubly commuting isometries},
  Linear Algebra Appl. \textbf{445} (2014), 289--301. \MR{3151275}

\bibitem[Sar24]{MR4804327}
Srijan Sarkar, \emph{Pairs of commuting pure contractions and isometric
  dilation}, J. Operator Theory \textbf{92} (2024), no.~1, 49--76. \MR{4804327}

\bibitem[SNFBK10]{MR2760647}
B\'{e}la Sz.-Nagy, Ciprian Foias, Hari Bercovici, and L\'{a}szl\'{o}
  K\'{e}rchy, \emph{Harmonic analysis of operators on {H}ilbert space},
  enlarged ed., Universitext, Springer, New York, 2010. \MR{2760647}

\bibitem[Spe98]{MR1407898}
Roland Speicher, \emph{Combinatorial theory of the free product with
  amalgamation and operator-valued free probability theory}, Mem. Amer. Math.
  Soc. \textbf{132} (1998), no.~627, x+88. \MR{1407898}

\bibitem[Sti55]{MR69403}
W.~Forrest Stinespring, \emph{Positive functions on {$C^*$}-algebras}, Proc.
  Amer. Math. Soc. \textbf{6} (1955), 211--216. \MR{69403}

\bibitem[Sza21]{MR4250453}
Franciszek~Hugon Szafraniec, \emph{Revitalising {P}edrick's approach to
  reproducing kernel {H}ilbert spaces}, Complex Anal. Oper. Theory \textbf{15}
  (2021), no.~4, Paper No. 66, 12. \MR{4250453}

\bibitem[Tia25]{tian2025randomoperatorvaluedkernelsmoment}
James Tian, \emph{Random operator-valued kernels and moment dilations},
  arXiv.2508.10321 (2025).

\bibitem[Tsi23]{MR4605405}
Georgios Tsikalas, \emph{Interpolating sequences for pairs of spaces}, J.
  Funct. Anal. \textbf{285} (2023), no.~7, Paper No. 110059, 43. \MR{4605405}

\bibitem[Voi85]{MR799593}
Dan Voiculescu, \emph{Symmetries of some reduced free product
  {$C^\ast$}-algebras}, Operator algebras and their connections with topology
  and ergodic theory ({B}u\c{s}teni, 1983), Lecture Notes in Math., vol. 1132,
  Springer, Berlin, 1985, pp.~556--588. \MR{799593}

\bibitem[Voi91]{MR1094052}
\bysame, \emph{Limit laws for random matrices and free products}, Invent. Math.
  \textbf{104} (1991), no.~1, 201--220. \MR{1094052}

\end{thebibliography}

\end{document}